\newtheorem{theorem}{Theorem}[section]
\newtheorem{pro}[theorem]{Proposition}
\newtheorem{lem}[theorem]{Lemma}
\newtheorem{rem}[theorem]{Remark}
\def\Aut{\hbox{\rm Aut}}
\def\GL{\hbox{\rm GL}}
\def\AGL{\hbox{\rm AGL}}
\def\G{\mathcal{G}}
\def\M{\mathcal{M}}
\def\V{\hbox{\rm V}}
\def\Cos{\hbox{\rm Cos}}
\def\Z{\mathbb{Z}}
\def\mod{\hbox{\rm mod }}
\renewcommand\arraystretch{0.8}
\def\a{\alpha}  \def\g{\gamma}  
 \def\s{\sigma}
\def\G{\Gamma}   \def\MM{{\cal M}} 
  \def\og{\overline G}
 \def\of{\overline F}
\def\ola{\overline a}\def\olc{\overline c}
\def\di{\bigm|} \def\lg{\langle} \def\rg{\rangle}
\def\nd{\mathrel{\bigm|\kern-.7em/}}
 \def\Exp{\hbox{\em Exp}}
\def\Aut{\hbox{\rm Aut}}
\def\mod{\hbox{\rm mod }}
\def\V {\hbox{\rm V}}
\def\AGL{\hbox{\rm AGL}}
\def\GL{\hbox{\rm GL}}
\def\Exp{\hbox{\rm Exp}}
\date{}
\begin{document}

\begin{center}
{\bf\Large  A Classification of  Orientably-Regular Embeddings \\of Complete Multipartite Graphs}
\end{center}

\begin{center}
{\sc Shaofei Du$^{a}$} and {\sc Jun-Yang Zhang$^{a,}$$^{b,}$}
\end{center}

\begin{center}
{\footnotesize  $^{a}$School of Mathematical Sciences, \\ Capital Normal University, Beijing 100048, P.R.China\\
$^{b}$ Department of Mathematics and Information Science, \\Zhangzhou Normal University,
 Zhangzhou, Fujian 363000, P.R.China}
\end{center}

\renewcommand{\thefootnote}{\empty}
\footnotetext{Supported by NNSF(10971144).}

{\large
\begin{abstract}
Let $K_{m[n]}$ be the complete multipartite graph with $m$ parts, while each part contains $n$ vertices. The orientably-regular embeddings of complete graphs $K_{m[1]}$ have been determined by Biggs (1971)~\cite{Big1}, James~and~Jones (1985)~\cite{JJ}.  During the past twenty years, several papers such as Du et al.(2007, 2010)~\cite{DJKNS1,DJKNS2}, Jones et al. (2007, 2008)~\cite{JNS1,JNS2}, Kwak and Kwon (2005, 2008)~\cite{KK1,KK2} and
Nedela et al. (1997, 2002)\cite{NS,NSZ} contributed to the orientably-regular embeddings
of complete bipartite graphs $K_{2[n]}$ and the final classification was
given by Jones \cite{Jon1} in 2010. Based on  our former paper  \cite{ZD}, this paper gives a complete
classification of  orientably-regular embeddings of
graphs $K_{m[n]}$ for the general cases $m\ge 3$ and $n\ge 2$.
\end{abstract} }



\section{Introduction}
A (topological) \emph{map} is a cellular decomposition of a closed surface.
A common way to describe such a map is to view it as a 2-cell embedding of
a connected graph or multigraph $\G$ into the surface $S$.  The components of the
complement $S \setminus \G$ are simply-connected regions called the \emph{faces}
of the map (or the embedding).
An \emph{automorphism} of a map $\MM $ is an automorphism of the underlying
(multi)graph $\G$ which extends to a self-homeomorphism of the supporting surface $S$.
It is well known that the automorphism group $\Aut(\MM)$ of a map $\MM$  acts semi-regularly on the set
of all incident vertex-edge-face triples (or \emph{flags} of $\G$). In particular, if $\Aut(\MM)$ acts regularly on the flags, we call it a {\em regular map}.
In the orientable case, if the group of all orientation-preserving
automorphisms of $\MM$ acts regularly on the set of all incident vertex-edge pairs (or \emph{arcs})
of $\MM$, then  we call $\MM$ an \emph{orientably regular\/} map.  Such maps fall
into two classes: those that admit also orientation-reversing automorphisms,
which are called \emph{reflexible}, and those that do not, which are  \emph{chiral}. Therefore, a reflexible map is a regular map but a chiral map is not.

One of the central problems in topological graph theory is to
classify all the regular embeddings in orientable or nonorientble
surfaces of a given graph. In a general setting, the
classification problem was treated by Gardiner,  Nedela, \v Sir\'
a\v n and \v Skoviera in \cite{GNSS}. However, for particular
classes of graphs, it has been solved only in a few cases.
Let $K_{m[n]}$ be the complete multipartite graph
with $m$ parts, while each part contains $n$ vertices.
All the  regular embeddings of complete graphs $K_{m[1]}$  have been determined
by Biggs, James and Jones \cite{Big1,JJ} for orientably case  and
by Wilson \cite{Wil} for nonorentably case.
As for the complete bipartite graphs $K_{2[n]}$,
the nonoritenably regular embeddings of these
graphs have recently been classified by Kwak and Kwon \cite{KK3};
during the past twenty
years,  several  papers \cite{DJKNS1,DJKNS2,JNS1,JNS2,KK1,KK2,NSZ} contributed to the orientably
  case, and  the  final classification was given by   Jones \cite{Jon1} in 2010.
Since then, the  classification  for general case $m\ge 3$ and
$n\ge 2$  has become an attractive topic in this research field.
The only known result is the determination of such   embeddings for $n=p$ a prime, given   by Du, Kwak and Nedela in \cite{DKN1}.

In this paper, we shall classify the orientably-regular embeddings of complete multipartite graphs. A start point is the main result in our former paper \cite{ZD}, namely the following reduction theorem.
\begin{pro}[reduction theorem]\label{rth}  Let $\M$ be an orientably-regular embedding of $K_{m[n]}$ where $m\ge 3$ and $n\ge 2$, with the group $\Aut^{+}(\M)$ of all orientation-preserving automorphisms.
 Let $\Aut^{+}_{0}(\M)$ be the normal subgroup  of $\Aut^{+}(\M)$ consisting of automorphisms preserving each part setwise.
Then $\Aut^{+}_{0}(\M)$ is an isobicyclic group. Moreover, we have
\begin{enumerate} \item[{\rm (1)}] if $m\ge 4$,  then $m=p$ and $n=p^e$ for some prime $p$; or
 \item[{\rm (2)}] if $m=3$, then $\Aut^{+}_{0}(\M)=Q\times K$, where $Q$ is a 3-subgroup (may be trivial) and $K$ is  an abelian $3'$-subgroup.
     \end{enumerate}
  \end{pro}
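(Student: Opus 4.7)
The plan is to analyse the arc-regular action of $G:=\Aut^+(\M)$, extract an isobicyclic structure on $G_0$ from the cyclic vertex-stabiliser, and then pin down (1) and (2) via the classification of sharply $2$-transitive groups together with Ito's theorem on products of cyclic groups. Arc-regularity gives $|G|=m(m-1)n^2$ and a cyclic vertex-stabiliser $G_{v_0}=\langle R\rangle$ of order $(m-1)n$; let $S$ be an involution swapping $v_0$ with a neighbour $v_1\in P_2$. The $m$ parts form a block system, yielding a homomorphism $\phi\colon G\to T\le S_m$ with kernel $G_0$.

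First I would show $G_0$ is isobicyclic of order $n^2$. The regular action of $\langle R\rangle$ on the $(m-1)n$ neighbours of $v_0$ has the $m-1$ neighbour-parts as blocks, so the kernel of the induced action on blocks is $A:=\langle R^{m-1}\rangle$, cyclic of order $n$, and $A=G_{v_0}\cap G_0$. Setting $B:=SAS^{-1}=G_{v_1}\cap G_0$ gives a second cyclic subgroup of order $n$; the intersection $A\cap B$ fixes the arc $(v_0,v_1)$ and is thus trivial by arc-regularity. Since $B$ acts transitively on $P_1$, one obtains $|G_0|\ge n^2$; on the other hand the $2$-transitivity of $T$ on parts (a consequence of arc-transitivity) coupled with $\phi(R)$ having order $m-1$ gives $|T|\ge m(m-1)$ and hence $|G_0|\le n^2$. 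Equality therefore holds throughout, so $G_0=AB$ is isobicyclic of order $n^2$, and $T$ is sharply $2$-transitive on parts with cyclic complement $\langle\phi(R)\rangle$ of order $m-1$.

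For case (1), $m\ge 4$, Frobenius/Zassenhaus theory gives $T=N\rtimes C$ with $N$ elementary abelian of order $m=p^a$ and $C$ cyclic of order $p^a-1$ acting as a Singer cycle on $N$. Ito's theorem applied to $G_0=AB$ makes $G_0$ metabelian. To obtain $n=p^e$ I would pick any prime $\ell\ne p$ dividing $n$: the corresponding Sylow $\ell$-subgroup of $G_0$ is characteristic, and any lift of $N$ in $G$ acts on it as a coprime $p$-group, forcing a trivial action that is incompatible with the irreducible Singer action of $C$; this rules out such $\ell$, giving $n=p^e$. Reducing $m=p^a$ to $a=1$ is the chief obstacle; I expect it to follow from a cocycle analysis of the extension $1\to G_0\to\phi^{-1}(N)\to N\to 1$, using that the rotation $R$ of order $(p^a-1)p^e$ must interact coherently with the isobicyclic factorisation, and this coherence fails unless $a=1$.

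For case (2), $m=3$, the quotient $T=S_3$ distinguishes the prime $3$. Let $Q$ be a Sylow $3$-subgroup and $K$ a Hall $3'$-complement of $G_0$; both are characteristic via the induced factorisations $Q=(A\cap Q)(B\cap Q)$ and $K=(A\cap K)(B\cap K)$, together with the action of $S$ swapping $A$ and $B$. Abelianness of $K$ then follows from Ito applied to the factorisation $K=(K\cap A)(K\cap B)$ of coprime-to-$3$ order, combined with the constraint that the order-$3$ element of $T$ acts on $G_0$ in a coprime fashion on $K$, which forces $[K\cap A,K\cap B]=1$. The remaining subtlety --- showing $G_0=Q\times K$ rather than a mere semidirect product --- is obtained from the fact that the induced conjugation of $K$ on $Q$ must be trivial by the same coprime-order argument, so that $K$ centralises $Q$.
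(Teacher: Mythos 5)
First, a point of order: the paper you are annotating does not prove Proposition~\ref{rth} at all --- it is imported verbatim as the main result of the authors' earlier paper [ZD], so there is no internal proof to compare against. Judged on its own merits, your opening reduction is sound and is the standard one: $|G|=m(m-1)n^2$; $A=\langle R^{m-1}\rangle=G_{v_0}\cap G_0$ and $B=A^S=G_{v_1}\cap G_0$ are cyclic of order $n$ with $A\cap B=1$ by semiregularity on arcs; the count $n^2=|AB|\le |G_0|=|G|/|T|\le n^2$ forces $G_0=AB$ and $T$ sharply $2$-transitive; and conjugation by $S$ supplies the required involution swapping $A$ and $B$. The identification $T\cong\AGL(1,p^a)$ from Zassenhaus (sharply $2$-transitive with cyclic point stabilizer) is also fine. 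So the first sentence of the proposition is genuinely established.

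The gaps lie in everything after that, which is exactly the content that made the reduction theorem worth a separate paper. (i) Your proof that $n$ is a power of $p$ rests on the assertion that a lift of $N$ ``acts on [the Sylow $\ell$-subgroup] as a coprime $p$-group, forcing a trivial action''; coprime actions are not trivial in general ($\Z_3$ acts faithfully on $\Z_7$), and you have also not shown that the Sylow $\ell$-subgroup of $G_0$ is normal --- a product of two cyclic groups is supersolvable, which yields a Sylow tower, not normality of every Sylow subgroup. (ii) The step $a=1$, i.e.\ $m$ prime, is explicitly deferred (``I expect it to follow from a cocycle analysis''); this is the single hardest step of the proposition and no argument is offered, so nothing is proved here. (iii) In case $m=3$ the same false principle ``coprime action is trivial'' is invoked three separate times: to make the Hall $3'$-part $K$ a characteristic subgroup (you must first show $(A\cap K)(B\cap K)$ is a subgroup at all), to force $[K\cap A,K\cap B]=1$ (Ito's theorem only gives that $K$ is metabelian, not abelian), and to force $K$ to centralize $Q$ so that the extension splits as a direct product. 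None of these follow from coprimality; they require the specific interplay between the order-$3$ element of $T$ cyclically permuting the three parts and the bicyclic factorization $G_0=AB$. As written, the proposal establishes that $\Aut^{+}_{0}(\M)$ is isobicyclic and that $m$ is a prime power, but neither conclusion (1) nor conclusion (2).
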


In Proposition~\ref{rth}, an isobicyclic group means a group
    $H=\lg x\rg \lg y\rg $, where $|x|=|y|=n$, $\lg x\rg \cap \lg y\rg =1$ and there exists an involution  $\a \in \Aut(H)$ such that $x^\a=y$.
    Throughout the paper, we call $(H,x,y)$ a  $n$-isobicyclic triple and   it plays an important role in the  classification of orientably-regular embeddings of $K_{m[n]}$.

As usual, the  orientably-regular map will be presented by a triple $(G; a, b)$ for a group $G$ generated by an element $a$ and an involution $b$ (see Section 2 for the details). The following is the main theorem of this paper.

\begin{theorem}[classification theorem]\label{main} For  $m\ge 3$ and $n\ge 2$, let $K_{m[n]}$ be the complete multipartite graph
with $m$ parts, while each part contains $n$ vertices. Suppose that $\MM$ is an orientably-regular embedding of $K_{m[n]}$ with  the group $G$ of all orientation-preserving automorphisms. Then $G$ and $\MM$  are given by
\begin{enumerate}
   \item $m=p\ge 5$, $n=p^e$ for a prime $p$:

   $G_{1}(p,e)=\langle a,c|a^{p^{e}(p-1)}=c^{p^{e+1}}=1, c^{a}=c^{r}\rangle$, where $r$ is a given generator of $\Z_{p^{e+1}}^*$;

 $\M_1(p,e,j)=\MM\big(G_{1}(p,e); a^{j}, a^{\frac{p^{e}(p-1)}{2}}c\big)$,
 where $j\in \Z_{p^{e}(p-1)}^{*}$.

 \item $m=n=p\ge 5$  for a prime $p$:

$\begin{array}{rl}
   G_{2}(p)=\hspace{-2mm}& \hspace{-2mm}\langle w,z\rangle\rtimes\langle c,g\rangle
   =\langle w,z,c,g\mid w^{p}=z^{p}=c^{p}=g^{p-1}=1, [w,z]=1,  c^{g}=c^{t}, \\
   & \hspace{48mm}w^c=wz, z^c=z, w^g=w,  z^{g}=z^{t}\rangle,
 \end{array}$

where $t$ is a given generator of $\Z_{p}^*$;

 $\M_2(p,j)=\M\big(G_{2}(p); wg^{j}, cg^{\frac{p-1}{2}}\big)$,
 where $j\in \Z_{p-1}^{*}$.

 \item  $m=p=3$, $n=k 3^e$ for $3\nmid k$ and $e\ne 0$:

$\begin{array}{rl}
     G_{3}(k,e)\hspace{-2mm}= & \hspace{-2mm}\langle a,b\mid a^{2\cdot3^{e}k}=b^{2}=1, c=a^{3^{e}}b, a^{2\cdot3^{e}}=x_{1}, x_{1}^{b}=y_{1}, [x_{1},y_{1}]=1, c^{3^{e+1}}=1, \\
      &\hspace{10mm}y_{1}^{a}=x_{1}^{-1}y_{1}^{-1}, c^{a}=c^{2}x_{1}^{u}y_{1}^{\frac{1-3^{e}}{2}u}\rangle,
   \end{array}$

where, if $k=1$, then $c^a=c^2$; if $k\ge 2$, then  $u3^{e}\equiv 1\pmod k$;

$\M_{3}(k,e,j)=\M(G_{3}(k,e); a^{j}, b)$, where $j\in \Z_{2\cdot 3^ek}^*$ and $\M_3(k,e,j_1)\cong \M_3(k,e,j_2)$ if and only if $j_1\equiv j_2\pmod{2\cdot 3^e}$.
\item  $m=p=3$, $n=3^ek$ for $3\nmid k$:

 $\begin{array}{rl}
   G_{4}(k,e,i,l)=\hspace{-2mm} & \hspace{-2mm}\langle a, b\mid a^{2n}=b^2=1, a^2=x, x^b=y, [x,y]=x^{\frac {in}3}y^{-\frac {in}3}, y^{a}=x^{-1}y^{-1},\\
    & \hspace{9mm}(ab)^3=x^{\frac {ln}3}y^{-\frac {ln}3}\rangle,
 \end{array}$

where, if $e=0$ then  $(i,l)=0$;  if $e=1$ then  $(i,l)=(0, 0)$, $(0,1)$; if $e\ge 2$ then  $(i,l)=(0,0),(0,1),(1,0),(1,1)$ or $(1,-1)$;

$\M_{4}(k,e, i, l,j)=\M(G_{4}(k,e,i,l); a^{j}, b)$, where $j=1$ for $(i,l)=(0,0)$  and $j=\pm1$ for other cases;

\end{enumerate}
The above maps are unique determined by the given parameters. The following two tables give  the enumerations for these maps.

{\renewcommand\arraystretch{1}\begin{center}
\begin{tabular}{l|l|c|l}
\multicolumn{4}{c}{\textbf{Table 1:} Enumerations of the resulting maps}\\ [5pt]\hline

Maps  & Number & Reflexible &   Type \\
   &   & or Chiral&    \\
 \hline
 $\M_{1}(p,e,j)$ &$p^{e-1}(p-1)\phi(p-1)$ & C  &$\{p^{e}(p-1),p^{e}(p-1)\}$,\\
    & &  & if~$p\equiv1~\pmod4$;\\
     & &  & $\{\frac{p^{e}(p-1)}{2},p^{e}(p-1)\}$,\\
    & &  & if $p\equiv3~\pmod4$.\\

  $\M_{2}(p,j)$, $p\ge 5$ &$\phi(p-1)$ & C  &$\{p(p-1),p(p-1)\}$,\\
    & &  & if $p\equiv1~\pmod4$;\\
      & &  & $\{\frac{p(p-1)}{2},p(p-1)\}$,\\
    & &  &if $p\equiv3~\pmod4$.\\
  $\M_{3}(k,e,j)$ & $2\cdot3^{e-1}$ & C &$\{3^{e+1}, 2n\}$\\
  $\M_{4}(k,e,0,0,1)$ & $1$ & R  &$\{3,2n\}$\\
  $\M_{4}(k,e,0,1,\pm1)$ & $2$ & C  &$\{9,2n\}$\\
  $\M_{4}(k,e,1,0,\pm1)$ & $2$ & C  &$\{3,2n\}$\\
  $\M_{4}(k,e,1,\pm1,\pm1)$ & $4$ & C  &$\{9,2n\}$\\
    \hline
\end{tabular}
\end{center}}
{\renewcommand\arraystretch{1}
\begin{center}
\begin{tabular}{ll|c|c|c}
\multicolumn{5}{c}{\textbf{Table 2:} Total numbers of regular embeddings of $K_{m[n]}$}\\[5pt]
\hline m & n &Reflexible &   Chiral &  Total \\ \hline
 $3$ & $k$ & $1$ & $0$ & $1$ \\
   & $3k$ & $1$ & $2$ & $3$ \\
   & $3^{e}k (e\ge 2)$ & $1$ &$2\cdot3^{e-1}+8$ & $2\cdot3^{e-1}+9$ \\
   $p\ge 5$ & $p$ & $0$ & $p\phi(p-1)$ & $p\phi(p-1)$\\
 & $p^{e}(e\ge 2)$ & $0$ & $p^{e-1}(p-1)\phi(p-1)$ & $p^{e-1}(p-1)\phi(p-1)$ \\
    \hline
\end{tabular}
\end{center}}
\end{theorem}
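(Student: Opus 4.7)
The strategy is to translate the embedding into the language of triples, apply Proposition~\ref{rth} to constrain the normal subgroup $G_0:=\Aut^+_0(\M)$, reconstruct $G:=\Aut^+(\M)$ by analyzing its cyclic and involutive generators, and finally carry out the enumeration.  Recall that an orientably-regular map with group $G=\lg a,b\rg$, $b^2=1$, is determined by the triple $(G;a,b)$; here $\lg a\rg$ is the stabiliser of a chosen vertex $v_0\in P_0$, so $|a|=n(m-1)$, the rotation $a$ fixes $P_0$ and permutes the remaining $m-1$ parts cyclically, and $b$ swaps $v_0$ with one of its neighbours.  Setting $x:=a^{m-1}$ and $y:=x^b$, the triple $(G_0,x,y)$ is $n$-isobicyclic; conversely, once $G_0$ together with a compatible pair $(a,b)$ generating $G$ is prescribed so that $G$ acts transitively on parts, a valid embedding is recovered.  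Hence the problem splits cleanly into: (i) classifying admissible $G_0$, (ii) reconstructing $G$ from each $G_0$, and (iii) enumerating the resulting maps up to isomorphism.

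The second step uses the two alternatives of Proposition~\ref{rth}.  For $m\geq 4$, $G_0$ is an $n$-isobicyclic $p$-group of order $p^{2e}$; the classification of such triples from \cite{ZD} yields a short list splitting into an abelian subcase (leading to the presentation $G_1(p,e)$) and a single non-abelian family (which forces $n=p$ and leads to $G_2(p)$).  For $m=3$ one has $G_0=Q\times K$ with $Q$ a $3$-group and $K$ an abelian $3'$-group, and the Sylow decomposition together with the isobicyclic condition give the admissible $K$ (cyclic of order $k$ paired with its $b$-conjugate) and the admissible $Q$ (either trivial or an isobicyclic $3$-group with a prescribed commutator structure indexed by a parameter).

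The third step reconstructs $G$ from each admissible $G_0$ by solving for the relations among $a$, $b$ and the isobicyclic generators.  The identities $x=a^{m-1}$, $y=x^b$, the face relation $(ab)^{|ab|}=1$, and the requirement that $y^a$ lie in $G_0$ with a specific exponent, become a system of congruences on the exponents appearing in the isobicyclic decomposition of $G_0$.  Solving these congruences case by case produces the four presentations $G_1,\dots,G_4$ of the theorem and fixes the admissible parameters; in particular, in Case~4 the compatibility of the $3$-part of $(ab)^3$ with the action of $a$ on $Q$ pins down $(i,l)$ to the displayed finite list, with the degenerations $(i,l)=(0,0)$ for $e=0$ and $(i,l)\in\{(0,0),(0,1)\}$ for $e=1$ handled by direct inspection.

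The final step is enumeration and the determination of type and chirality.  The face relation gives $|ab|$ and hence the type $\{|ab|,n(m-1)\}$ listed in Table~1; two parameter values $j,j'$ give isomorphic maps precisely when an automorphism of $G$ carries $(a^j,b)$ to $(a^{j'},b)$, which reduces to counting orbits in $\Z_{|a|}^*$, while the map is reflexible iff some automorphism of $G$ sends $a\mapsto a^{-1}$ and fixes $b$.  These two checks produce the columns of Table~1, and Table~2 follows by summation over the families applicable to each $(m,n)$.  The main obstacle is Step~3 in Case~4 when $e\geq 2$: the non-trivial $3$-part $Q$ interacts with the order-$2$ action of $a$ on the parts in several inequivalent ways, yielding the five pairs $(i,l)\in\{(0,0),(0,1),(1,0),(1,\pm 1)\}$, and verifying that each produces a genuinely distinct orientably-regular embedding of $K_{3[n]}$ (reflexible only for $(i,l)=(0,0)$) is the most delicate portion of the argument.
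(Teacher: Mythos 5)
Your overall frame (reduce via Proposition~\ref{rth}, reconstruct $G$ from $G_0=\Aut^+_0(\M)$, then enumerate) matches the paper's outline, but the core of your Step~(ii)--(iii) contains a genuine gap, and one of its concrete claims is wrong. You assert that for $m\ge 4$ the abelian subcase of $G_0$ leads to $G_1(p,e)$ and a ``single non-abelian family'' forces $n=p$ and leads to $G_2(p)$. This is backwards: $G_2(p)$ has $G_0=\langle w,z\rangle\cong\Z_p\times\Z_p$ abelian, while $G_1(p,e)$ for $e\ge 2$ has \emph{non-abelian} $G_0$ (metacyclic with $f=1$; see Remark~\ref{r}). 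More importantly, the dichotomy that actually organizes the paper's proof is not a property of $G_0$ at all but the exponent of a Sylow $p$-subgroup $P$ of the full group $G$: $\Exp(P)=p^{e+1}$ versus $\Exp(P)=p^{e}$. In the first case the existence of an element of order $p^{e+1}$ acting regularly on vertices makes $G$ a product of two cyclic groups, and Ito's theorem together with the Conder--Isaacs result forces $G'$ to be cyclic, which is what yields the presentation $G_1(p,e)$; none of this is visible in your sketch.

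The deeper missing content is the pair of exclusion theorems that make the list complete. A priori $G_0$ is metacyclic of the form $\langle x,z\mid x^{p^e}=z^{p^e}=1,\ z^x=z^{1+p^f}\rangle$ for any $f\in[e]$, so there are $e$ candidate isobicyclic groups, not a short list; the paper must prove that almost all of them support no embedding. Concretely, Theorem~\ref{ml} shows (by a matrix computation modulo $p^2$ applied to the action of $c=a^{p^e(p-1)/2}b$ on $H$) that a non-abelian $G_0$ with $\Exp(P)=p^e$ forces $p=3$ and $f=e-1$, and Theorem~\ref{abelian} shows that an abelian $G_0$ with $\Exp(P)=p^e$ and $p\ge 5$ forces $e=1$, whence $G_2(p)$ via the known classification of $K_{p[p]}$. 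Your proposal replaces these arguments with ``solving these congruences case by case produces the four presentations,'' but the congruences you list (the face relation, $y^a\in G_0$, etc.) do not by themselves rule out the other values of $f$ or the other exponents; without the two exclusion theorems the classification is not known to be exhaustive. The same issue recurs in Case~4 for $m=3$: identifying $[x,y]=x^{\pm n/3}y^{\mp n/3}$ and pinning $Q'\cong\Z_3$ there relies on first quotienting by $K$ and invoking Theorem~\ref{abelian}, which your outline does not supply.
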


\begin{rem} \label{rem1} From the Theorem~\ref{main} and its  proof in Sections 3 and 4, we may obtain  some remarks as follows.
\begin{enumerate}
   \item $\M_1(p,e,j)$ and $\M_3(1, e, j)$  are   two families   of Cayley maps.
   \item Let  $P$ be a Sylow $m$-subgroup of $G=\Aut^{+}(\MM)$ ($m=p\geq5$ or $m=3$) and let $\Exp(P)$ be  the exponent of $P$. Then   $\Exp(P)=m^{e+1}$ for the groups $G_{1}(p,e)$ or $G_{3}(k,e)$; and $\Exp(P)=m^e$ for the groups $G_{2}(p)$ or $G_{4}(k,e,i,l)$.
   \item For $m=p\geq5$, $H=\Aut^{+}_{0}(\M)$ is a $p^{e}$-isobicyclic group and hence $H'$ must be a cyclic group of order at most $p^{e-1}$. It is interesting that either $H'\cong\Z_{p^{e-1}}$ (with the biggest possible order) or $H'=1$ (with the smallest possible order).

     Similarly, for $m=3$, $\Aut^{+}_{0}(\M)=Q\times K$ where $K$ is abelian, and   we have that either $Q'\cong \Z_{3^{e-1}}$ (with the biggest possible order), or $Q'\lesssim\Z_{3}$ (with the smallest  or the second smallest possible order).
\item The orientably-regular embeddings of $K_{m[p]}$ for prime $p$ have been  classified in \cite{DKN}, and they are precisely the maps $\M_1(p,1,j)$,  $\M_2(p,j)$, $\MM_3(1,1,j)$, $\M_4(1, 1, 0, 0, j)$ and $\M_4(p, 0, 0, 0, j)$.

The orientably-regular embeddings of $K_{3[n]}$ when $H=\Aut^{+}_{0}(\M)$ is abelian have been  classified in \cite{ZD}, and they are precisely the maps $\M_4(k,e, 0, l,j)$.

By examining the
Conder's lists of orientably-regular maps of type from 2 to 101 (see \cite{Conder1}), one can see that there are 15 orientably-regular embeddings of $K_{3[9]}$, which exactly coincides with our results here.
\end{enumerate}
\end{rem}

The paper is organized as follows. After this induction section, we describe the
orientably-regular maps in more details and give  some  preliminary results for later use in Section 2.  To classify the orientably-regular embeddings of $K_{m[n]}$ for the general cases $m\ge 3$ and $n\ge 2$, by Proposition \ref{rth} we only need to consider  the graphs  $K_{p[p^{e}]}$ for $p\ge 5$ a prime and the graphs $K_{3[n]}$ separately.
These two cases  will be dealt with  in Section 3 and  Section 4 respectively. Finally, the proof of  Theorem~\ref{main}  is  summarized in Section 5.

\section{Preliminaries}
Throughout this paper, all graphs are finite, simple and undirected. For a graph $\Gamma$, by $\mathrm{V}(\Gamma)$, $\mathrm{E}(\Gamma)$ and $\mathrm{D}(\Gamma)$ we  denote the vertex set, edge set and arc set of $\Gamma$, respectively. For any positive integer $n$,  set $[n]=\{1,\cdots,n\}.$ For a prime divisor $p$ of $n$, by $p^{d}\|n$ we denote that $p^{d}$ but $p^{d+1}\nmid n.$ For a ring $R$, we use $R^{*}$ to
denote the multiplicative group of $R$.
 The cyclic group of order $n$ as well as the integer residue ring modulo $n$ will be denoted by $\Z_{n}$ and
 the dihedral group of order $n$ will be denoted by $\mathbb{D}_{n}$.  By  $\mathrm{Z}(G)$,  $N_G(K)$ and $C_G(K)$
 we denote the center of the group $G$, the normalizer and the  centralizer of a subgroup $K$ in $G$, respectively.
  By $N\rtimes K$, we denote a semidirect product of $N$ by $K$ where $N$ is normal.
  For the notions not defined here, please refer \cite{Big2,Hup}.

It is well known that the automorphism group $G=\Aut(\M)$ of a regular map $\M$ is generated by
a generator $a$ of the stabilizer of a vertex $\gamma$ (which is necessarily cyclic) and an involution $b$
inverting an edge incident with $\gamma$, see \cite{GNSS}. Moreover, the embedding is determined by the group
$G$ and the choice of generators $a$ and $b$ \cite{NS,DKN}.

If the underlying graph is simple, then we may describe it by so called coset graphs.
Let $G=\lg a, b\rg $ be a  group where $a^t=b^2=1$ and $\lg a\rg $ is core-free. Let  $\Gamma=\Cos(G, \langle a\rangle, b)$  be the coset graph with vertex set $\V(\Gamma)=\{\langle a\rangle g\mid g\in G\}$ and arc set $\mathrm{D}(\Gamma)=\{(\langle a\rangle g,\langle a\rangle bg)\mid g\in G\}$. Then $G$ acts regularly on $\mathrm{D}(\Gamma)$ by right multiplication, the stabilizer of the vertex $\langle a\rangle1$ is the subgroup $\langle a\rangle$ of $G$ and $b$ is an involution inverting the arc $(\langle a\rangle1,\langle a\rangle b)$. By defining the local rotation $R$
by $(\lg a\rg g, \lg a\rg b g)^R=(\lg a\rg g, \lg a\rg b ag)$, we get an orientably-regular map, called    {\it algebraic map},
  denoted by $\M(G; a, b)$.

It is easy to show that two algebraic maps $\M(G; a, b)$ and $\M(G; a', b')$ are
isomorphic if and only if there is a group automorphism in $\Aut(G)$ taking $a\mapsto a'$ and $b\mapsto b'$. If the order of $ab$ and $a$ are  $s$ and $t$ respectively, then $\M(G;a,b)$ has type $\{s,t\}$ in the notation of Coxeter and Moser \cite{CM},
meaning that the faces are all $s$-gons and the vertices all have valency $t$. If $\M(G; a, b)$ and $\M(G; a^{-1}, b)$ are
isomorphic maps, then $\M(G; a, b)$ is reflexible, otherwise $\M(G; a, b)$ is chiral.

From  the above arguments, one can transfer the classification
problem of regular embeddings of a given graph into a purely group theoretical problem.
More precisely, one may classify all the regular maps with a given underlying graph $\Gamma$ of
valency $t$ in the following two steps:
\begin{enumerate}
  \item Find the representatives $G$ (as abstract groups) of the isomorphism classes of arc-regular subgroups of $\Aut(\Gamma)$ with cyclic vertex-stabilizers.
  \item For each group $G$ given in (1), determine all the algebraic regular maps $\M(G; a, b)$
with underlying graphs isomorphic to $\Gamma$, or equivalently, determine the representatives
of the orbits of $\Aut(\Gamma)$ on the set of generating pairs
$(a, b)$ of $G$ such that $|\langle a\rangle|=t$,
$|\langle b\rangle|=2$ and $\Cos(G,\langle a\rangle,b)\cong\Gamma$.
\end{enumerate}

\vskip 3mm
Now we give two lemmas for later use.

\begin{lem}\label{cos}
Suppose that $m$ is an odd prime and $n>2$ is an integer.  Let $G=\langle a,b\rg $  and $H=\langle x, y\rangle$ where  $a^{m-1}=x$ and $x^{b}=y$.
If $(H,x, y)$ is a $n$-isobicyclic triple, $H\unlhd G$, $G/H\cong \AGL(1,m)$ and $C_{G}(H)=\mathrm{Z}(H)$,  then $\M(G; a, b)$ is a regular embedding of $K_{m[n]}$.
\end{lem}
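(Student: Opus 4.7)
\textbf{Proof plan for Lemma \ref{cos}.}
The plan is to verify that the coset graph $\Gamma=\Cos(G,\langle a\rangle,b)$ underlying $\M(G;a,b)$ is isomorphic to $K_{m[n]}$ and that $\langle a\rangle$ is core-free in $G$, so that $\M(G;a,b)$ is a bona fide orientably-regular map. First I would pin down $|a|$ and $\langle a\rangle\cap H$. Since $a^{m-1}=x\in H$, the image $\bar a=aH$ has order dividing $m-1$ in the quotient $G/H\cong\AGL(1,m)$. Using that $\langle\bar a,\bar b\rangle=\AGL(1,m)$, that $\AGL(1,m)$ is sharply $2$-transitive, and the hypothesis $C_G(H)=\mathrm{Z}(H)$ (to rule out unexpected centralization of $H$ by powers of $a$ that would enlarge $\langle a\rangle\cap H$), I expect to force $|\bar a|=m-1$, $\langle a\rangle\cap H=\langle x\rangle$, and hence $|a|=n(m-1)$ and $|\V(\Gamma)|=[G:\langle a\rangle]=mn$.

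Next, since $H\trianglelefteq G$, the orbit of $\langle a\rangle g$ under the natural right action of $H$ on $\V(\Gamma)$ is the set of right cosets inside the double coset $\langle a\rangle Hg$. The $H$-stabilizer of $\langle a\rangle\cdot 1$ is $\langle a\rangle\cap H=\langle x\rangle$ of order $n$, so each orbit has size $n$ and the number of orbits is $[G:H\langle a\rangle]=m$. This exhibits the $m$ candidate parts of size $n$. For the valency I would show $\langle a\rangle\cap b\langle a\rangle b^{-1}=1$: modulo $H$, the subgroups $\langle\bar a\rangle$ and $\bar b\langle\bar a\rangle\bar b^{-1}$ are point stabilizers of two distinct points in the natural action of $\AGL(1,m)$ (distinct since otherwise $\langle\bar a,\bar b\rangle$ would lie inside a single cyclic point stabilizer of order $m-1$, contradicting $|G/H|=m(m-1)$), hence intersect trivially by sharp $2$-transitivity. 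Thus $\langle a\rangle\cap b\langle a\rangle b^{-1}\leq H$, and combining with $\langle a\rangle\cap H=\langle x\rangle$ and $b\langle a\rangle b^{-1}\cap H=\langle y\rangle$, the intersection lies in $\langle x\rangle\cap\langle y\rangle=1$ by the isobicyclic condition. This yields simultaneously that $\Gamma$ is simple of valency $|\langle a\rangle|=n(m-1)$ and that $\Core_{G}(\langle a\rangle)\leq\langle a\rangle\cap b\langle a\rangle b^{-1}=1$, so $\langle a\rangle$ is core-free.

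It then remains to check that every edge of $\Gamma$ joins vertices in two different $H$-orbits. The neighbor $\langle a\rangle b$ of $\langle a\rangle\cdot 1$ lies in the $H$-orbit $\langle a\rangle Hb$, which differs from $\langle a\rangle H$ precisely because $\bar b\notin\langle\bar a\rangle$ (already exploited above). Hence every edge crosses between parts, and comparing the valency $n(m-1)$ with the number $n(m-1)$ of vertices outside any fixed part forces each vertex to be adjacent to every vertex outside its part; i.e., $\Gamma\cong K_{m[n]}$. The map $\M(G;a,b)$ is then orientably regular by the construction recalled in Section~2. The main obstacle I anticipate is the very first step: carefully ruling out the possibility that $|\bar a|$ is a proper divisor of $m-1$ (which would make $\langle a\rangle\cap H$ strictly larger than $\langle x\rangle$ and spoil the vertex count); this is where the structure of $\AGL(1,m)$ together with $C_G(H)=\mathrm{Z}(H)$ must be used to force $\langle a\rangle$ to project onto a full point stabilizer of $G/H$.
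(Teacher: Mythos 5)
Your overall strategy is the same as the paper's: verify that $\Cos(G,\lg a\rg,b)$ is simple of order $mn$ and valency $(m-1)n$, exhibit the $m$ blocks of size $n$ as the cosets of $\lg a\rg$ inside $\lg a\rg H$ and its translates, check that the base edge crosses blocks, and finish by counting. The one genuinely different ingredient is how you get $\lg a\rg\cap\lg a\rg^{b}\le H$: you argue inside $G/H\cong \AGL(1,m)$ that $\lg \ola\rg$ and $\lg\ola\rg^{\olb}$ lie in distinct Frobenius complements, which meet trivially. That is correct (if both lay in the same complement $S$, uniqueness of the complement containing $\ola$ would force $\olb\in N(S)=S$ and hence $G/H=\lg\ola,\olb\rg\le S$, a contradiction), and it is a legitimate alternative to the paper's argument, which instead observes that $\lg a\rg\cap\lg a\rg^{b}$ centralizes both $x=a^{m-1}$ and $y=(a^{b})^{m-1}$, hence lies in $C_{G}(x)\cap C_G(y)=C_{G}(H)=\mathrm{Z}(H)\le H$. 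Note that this centralizer step is the only place the paper actually uses the hypothesis $C_{G}(H)=\mathrm{Z}(H)$; your plan relocates that hypothesis to a different step, where it is not clear it does any work. Both routes then conclude with $\lg x\rg\cap\lg y\rg=1$, and both give simplicity, the valency, and core-freeness of $\lg a\rg$ at once.

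The point you flag as the main obstacle, namely forcing $|\ola|=m-1$ (hence $\lg a\rg\cap H=\lg x\rg$ and $|G:\lg a\rg|=mn$), is indeed the delicate one, and the paper simply does not address it: it passes from $|G|=m(m-1)n^{2}$ to $|G:\lg a\rg|=mn$ without comment, tacitly assuming $|a|=(m-1)n$, which does hold in every application of the lemma because there $a$ is by construction a generator of a vertex stabilizer of order $(m-1)n$. Your proposed derivation does not yet close this: the quotient of $\AGL(1,m)$ by its translation subgroup is cyclic of order $m-1$ and, since $|\ola|$ and $|\olb|$ are coprime to $m$, is generated by elements of these same orders, so one only gets $\mathrm{lcm}(|\ola|,|\olb|)=m-1$ with $|\olb|\le 2$; this leaves open the possibility $|\ola|=(m-1)/2$ whenever $(m-1)/2$ is odd, and sharp $2$-transitivity does not exclude it, nor is it evident how $C_{G}(H)=\mathrm{Z}(H)$ would. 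So either add $|a|=(m-1)n$ (equivalently $|\ola|=m-1$) as an explicit hypothesis, which is harmless since it is satisfied wherever the lemma is invoked, or supply a genuine argument for it; as written this step is a gap in your plan, though one inherited from, rather than created relative to, the paper's own proof.
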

\begin{proof}
It suffices to show  that the coset graph  $\Gamma=\Cos(G,\langle a\rangle, b)$ is  a complete $m$-partite graph. Since $(H,x, y)$ is a $n$-isobicyclic triple, we have $H=\langle x\rangle\langle y\rangle$, $\langle x \rangle\bigcap\langle y \rangle=1$ and  $|H|=n^{2}$. Noting that $|G|=|G/H||H|=|\AGL(1,m)||H|=m(m-1)n^{2}$, we have $|G:\langle a\rangle|=mn$.
Since $a^{m-1}=x$, $(a^{b})^{m-1}=(a^{m-1})^{b}=y$, $H=\langle x,y\rangle$ and $C_{G}(H)=\mathrm{Z}(H)$, we have $\langle a\rangle\bigcap \langle a^{b}\rangle\leq C_G(x)\cap C_G(y)=C_G(H)=\mathrm{Z(}H)\le H$.
 It follows that $\langle a\rangle\bigcap \langle a^{b}\rangle\leq H\cap \langle a\rangle\bigcap\langle a^b\rangle
 =\langle x\rangle\bigcap\langle y\rangle
 =1$, that is $\langle a\rangle\bigcap \langle a^{b}\rangle=1$. Therefore $\Gamma$ is a simple graph of order $mn$ and valency $(m-1)n$.

Set $\Delta=\{\langle a\rangle h\mid h\in H\}$ and $\Sigma=\{\Delta g\mid g\in G\}.$
Then $\Sigma$ is a block system for $G$ acting on $\V(\Gamma)$.
Since $|\langle a\rangle H:\langle a\rangle|=|\langle a\rangle \langle x\rangle\langle y\rangle:\langle a\rangle|=|\langle a\rangle\langle y\rangle:\langle a\rangle|=n$, we have
$|\Delta|=n$ and then $|\Sigma|=m$. Clearly, $y^{i-j}\notin b\langle a\rangle$ for any two integers $i$ and $j$ in $[n]$.   Noting that $\Delta=\{\langle a\rangle h\mid h\in H\}=\{\langle a\rangle y^{i}\mid i\in[n]\}$,  $\Delta$ contains no pair of adjacent vertices. Therefore $\Gamma$ is a complete $m$-partite graph.\end{proof}

\begin{lem}\label{p4} Let $q=1+p^{f}$ where $p$ is a prime and $f\ge 1$ . If $p^{d}|k$ where $d\geq 1$, then
\begin{enumerate}
  \item[{\rm (1)}] $(q^{k}-1)/(q-1)\equiv k\pmod{p^{d+f}}$;
  \item[{\rm (2)}] $(q^{k+1}-1)/(q-1)\equiv k+1\pmod{p^{d+f}}$.
\end{enumerate}
\end{lem}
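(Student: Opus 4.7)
The plan is to express both sides of the congruences through a single closed-form expansion of the geometric sum. Writing $(q^{k}-1)/(q-1)=\sum_{i=0}^{k-1}q^{i}$ and expanding $q^{i}=(1+p^{f})^{i}$ by the binomial theorem, I swap the order of summation and apply the hockey-stick identity $\sum_{i=j}^{k-1}\binom{i}{j}=\binom{k}{j+1}$ to obtain the clean formula
\[
\frac{q^{k}-1}{q-1}\;=\;\sum_{j\ge 0}\binom{k}{j+1}p^{jf}.
\]
The $j=0$ term is exactly $k$, so assertion~(1) reduces to showing that every term with $j\ge 1$ has $p$-adic valuation at least $d+f$.

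To handle the divisibility, I use the factorization $\binom{k}{j+1}=\frac{k}{j+1}\binom{k-1}{j}$ together with $v_{p}(k)\ge d$ to conclude
\[
v_{p}\!\Bigl(\binom{k}{j+1}p^{jf}\Bigr)\;\ge\;d-v_{p}(j+1)+jf,
\]
so the desired lower bound $d+f$ reduces to the clean numerical inequality $(j-1)f\ge v_{p}(j+1)$. For $j=1$ this asks for $v_{p}(2)=0$, which holds as soon as $p$ is odd (the only regime in which the lemma is invoked later in the paper), and for $j\ge 2$ it follows from $f\ge 1$ together with $p^{j-1}\ge j+1$. For part~(2), I split
\[
\frac{q^{k+1}-1}{q-1}\;=\;\frac{q^{k}-1}{q-1}+q^{k}
\]
and reduce via (1) to the auxiliary claim $q^{k}\equiv 1\pmod{p^{d+f}}$; expanding $q^{k}=1+kp^{f}+\sum_{i\ge 2}\binom{k}{i}p^{if}$, the linear term is annihilated by the hypothesis $p^{d}\mid k$, and the higher-order terms are controlled by exactly the same valuation estimate used in part~(1).

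The only real obstacle is the $p$-adic bookkeeping concentrated in the uniform inequality $(j-1)f\ge v_{p}(j+1)$: the $j=1$ case is the tight one and is what forces the implicit restriction to odd $p$, while every other case has generous slack. Once this single estimate is in hand both parts of the lemma drop out immediately, with no further case analysis on $k$ beyond $v_{p}(k)\ge d$.
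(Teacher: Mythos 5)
Your proof is correct and follows essentially the same route as the paper's: your expansion $\sum_{j\ge 0}\binom{k}{j+1}p^{jf}$ is the paper's $\sum_{i\ge 1}\binom{k}{i}p^{(i-1)f}$ reindexed, the factorization $\binom{k}{j+1}=\frac{k}{j+1}\binom{k-1}{j}$ and the resulting valuation bound are the same ones the paper uses, and part (2) is handled identically via $q^{k}\equiv 1\pmod{p^{d+f}}$. Your explicit observation that the $j=1$ term forces $p$ to be odd is a genuine point the paper's proof glosses over (the lemma as stated fails for $p=2$, e.g.\ $q=3$, $k=2$, $d=f=1$), but this is harmless since the lemma is only invoked for odd primes.
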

\begin{proof}
 (1)~~Since
$(q^{k}-1)/(q-1)=k+\sum\limits_{i=2}^{k}\binom{k}{i}p^{(i-1)f}$,
it suffices  to prove that $p^{d+f}|\binom{k}{i}p^{(i-1)f}$ for any  $2\le i\le k$.

The conclusion is clear  for  $i-1\geq \frac{d+f}{f}$ and so we
 assume that $i-1<\frac{d+f}{f}$. Then $2\le i<2+\frac{d}{f}\leq p^{d}$. Set $k=k'p^{d}$ and $i=i'p^{d_{i}}$ where $(p, i')=1$. Then $0\leq d_{i}<d$ and
\begin{equation*}
\binom{k}{i}
=\binom{k-1}{i-1}\frac ki=\binom{k-1}{i-1}\frac{k'}{i'}p^{d-d_{i}}.
\end{equation*}
Since $\binom{k}{i}$ is an integer and $(i',p)=1$, we have $i'\mid \binom{k-1}{i-1}k'$ and hence $\binom{k-1}{i-1}\frac{k'}{i'}$ is an integer as well. Noting that
\begin{equation*}
\binom{k}{i}p^{(i-1)f}
=\binom{k-1}{i-1}\frac{k'}{i'}p^{(i-1)f+d-d_{i}},
\end{equation*}
we have $p^{(i-1)f+d-d_{i}}\mid \binom{k}{i}p^{(i-1)f}$.
Noting $i'p^{d_i}=i\ge 2$, one may  check that $i'p^{d_{i}}-d_{i}-1\ge 1$. Then
\begin{equation*}
(i-1)f+d-d_{i}=(i'p^{d_{i}}-1)f+d-d_{i}\geq (i'p^{d_{i}}-d_{i}-1)f+d\geq f+d,
\end{equation*}
which implies   $p^{d+f}|\binom{k}{i}p^{(i-1)f}$.
\vskip4mm
(2) By the Binomial Theorem, we have
\begin{equation*}
q^{k}-1=(1+p^{f})^{k}-1=\binom{k}{1}p^{f}+\binom{k}{2}p^{2f}+\cdots+\binom{k}{k}p^{kf}.
\end{equation*}
Then we get $p^{d+f}\mid q^{k}-1$ since $p^{d}\mid k$. It follows that
$q^{k}\equiv 1\pmod{p^{d+f}}$. Since it has been proved that
\begin{equation*}
(q^{k}-1)/(q-1)\equiv k\pmod{p^{d+f}},
\end{equation*}
we get
\begin{equation*}
(q^{k+1}-1)/(q-1)=(q^{k}-1)/(q-1)+q^{k}\equiv k+1\pmod{p^{d+f}}.
\end{equation*}
\end{proof}

\section{Regular embeddings of $K_{p[p^{e}]}$}
In this section, we manly consider the case  $m=p$ and $n=p^{e}$ where $p\ge 5$ is a prime.  However, to obtain  some results  used  in Section 4,  we allow $p=3$ in  this section if no state explicitly.

Set $\G=K_{p[p^e]}$, with  the vertex set
\begin{equation*}
V(\G)=\bigcup_{i=1}^{p}\Delta_{i},~\mbox{where}~\Delta_{i}=
\{\gamma_{i1},\gamma_{i2},\cdots,\gamma_{i{p^e}}\}
\end{equation*}
and the edges  are all pairs $\{\gamma_{ij},\gamma_{kl}\}$ of vertices with $i\ne k$.
Then $\Aut(\G )=S_{p^e}\wr S_{p}$, which has blocks $\Delta_i$ where $1\le i\le p$.

Let $\MM$ be an orientably-regular map with the underlying graph  $\G$ and set $G=\Aut^{+}(\MM)=\lg a, b\rg $, where $\lg a\rg =G_{\gamma_{11}}$ and $b$ reverses  the arc $(\gamma_{11},\gamma_{21})$. We use $H=\Aut_{0}^{+}(\MM)$ to denote
the normal subgroup of $G$ consisting of automorphisms preserving each part setwise.
By Proposition~\ref{rth}, $H$ is a $p^e$-isobicyclic group.

By a result of Hupert~\cite{H}, $H$ is metacyclic.  One can see from \cite{JNS1} that
 \begin{equation}\label{H}
H=\langle x,z|x^{p^{e}}=z^{p^{e}}=1,z^{x}=z^{q}\rangle,
\end{equation}
where  $q=1+p^{f}$ for $f\in [e]$ and different $f$ give nonisomorphic groups. Particularly, $H\cong\Z_{p^{e}}\times\Z_{p^{e}}$ if $f=e$ and
$H$ is nonabelian if $f\in[e-1]$. Each element of $H$ can be written uniquely in the form $x^{i}z^{j}$ where $i,j\in \Z_{p^{e}}$, with the rules
\begin{equation}\label{M}
 (x^{i}z^{j})(x^{k}z^{l})=x^{i+k}z^{jq^{k}+l}\quad\mbox{and}\quad (x^{i}z^{j})^{k}=x^{ik}z^{j(q^{ik}-1)/(q^{i}-1)}.
\end{equation}
The center, derived subgroup and Frattini subgroup of $H$ are $\mathrm{Z}(H)=\langle x^{p^{e-f}},z^{p^{e-f}}\rangle$, $H'=\langle z^{p^f} \rangle$ and $\Phi(H)=\langle h^{p}\mid h\in H\rangle$, respectively. The exponent $\Exp(H)$ of $H$ is $p^{e}$.  Moreover, $H$ is a regular $p$-group, that is, all elements $h_{1}, h_{2}\in H$ satisfy
$(h_{1}h_{2})^{p}=h_{1}^{p}h_{2}^{p}c_{1}^{p}\cdots c_{k}^{p}$ where $c_{1},\cdots, c_{k}\in\langle h_{1},h_{2}\rangle'$.

Since $H_{\gamma_{11}}=\langle a^{m-1}\rangle$, one can set $a^{m-1}=x^iz^j$ where $p\nmid i$.   By Eq.(\ref{M}), we have $a^{i^{-1}(m-1)}=xz^{j'}$ for some $j'\in\Z_{p^{e}}$ and then
$z^{a^{i^{-1}(m-1)}}=z^x=z^{1+q}$. Replacing
$a$ and $x$ by $a^{i^{-1}}$ and $xz^{j'}$ respectively, we may assume that $a^{m-1}=x$.
Then $(H,x,y)$ is a $p^{e}$-isobicyclic triple by setting $y=x^{b}$.

Let $P$ be a Sylow $p$-subgroup of $G$. Then $P$ is an extension of $H$ by $\Z_p$.  Since $\Exp(H)=p^{e}$, we have $\Exp(P)=p^{e+1}$ or $p^{e}$ and then  we shall discuss these two cases in the following two subsections, separately.
\subsection{$\Exp(P)=p^{e+1}$}
\begin{theorem}\label{m1}
Suppose that $\Exp(P)=p^{e+1}$ where $p\ge 5$. Then
$\MM$  is isomorphic to one of the maps
 $\M_1(p,e,j)$ where $j\in \Z_{p^e(p-1)}^*$.
Moreover, all of the maps $\M_1(p,e,j)$ are orientably-regular embeddings of $K_{p[p^{e}]}$ and such maps are uniquely determined by the parameter $j$.
\end{theorem}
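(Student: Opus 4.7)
The plan is to produce generators $(a,c)$ of $G=\Aut^+(\MM)$ that satisfy the defining relations of $G_{1}(p,e)$, to rewrite the involution $b$ in the form $a^{p^{e}(p-1)/2}c$, and finally to invoke Lemma~\ref{cos} to certify that the resulting algebraic map is a regular embedding of $K_{p[p^{e}]}$. The first step, constructing $c$, is the main obstacle. Since $|P|=p^{2e+1}$, $[P:H]=p$ and $\Exp(H)=p^{e}$, the hypothesis $\Exp(P)=p^{e+1}$ supplies an element of $P\setminus H$ of order $p^{e+1}$. I would exploit the coprime action of $\sigma:=a^{p^{e}}$, the $(p-1)$-part of $a$, on $P/\Phi(P)$ to pick a $\sigma$-eigenvector whose lift $c\in P$ has order $p^{e+1}$ with $\langle c\rangle$ invariant under $\sigma$. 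Since $\langle c\rangle\cap H$ is the unique subgroup of $\langle c\rangle$ of order $p^{e}$, it is stable under conjugation by $x=a^{p-1}\in H$, and an adjustment exploiting that $H$ is a regular $p$-group lets me arrange $\langle c\rangle\triangleleft\langle a,c\rangle$, yielding a relation $c^{a}=c^{r}$ with $r\in\Z_{p^{e+1}}^{*}$.

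Next I would show that $r$ generates $\Z_{p^{e+1}}^{*}$. The isomorphism $G/H\cong\AGL(1,p)$, standard in this setup, forces $r\bmod p$ to be a generator of $\Z_{p}^{*}$, since otherwise the image of $a$ would not generate the cyclic complement in $G/H$. The $p$-part of the order of $r$ is controlled by Lemma~\ref{p4}: tracking the action of $a$ through the filtration $\langle c\rangle>\langle c^{p}\rangle>\cdots$ shows that $r$ has full multiplicative order $p^{e}(p-1)$. A comparison of orders then yields $\langle a,c\rangle=G$, and the defining relations match those of $G_{1}(p,e)$.

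Turning to $b$: its image $\bar b$ in $G/H\cong\AGL(1,p)$ is an involution, hence of the form $\bar a^{(p-1)/2}\bar g$ where $\bar g$ is a coset representative of the normal Sylow $p$-subgroup of $G/H$. Lifting this decomposition into $G$ and imposing $b^{2}=1$ together with $c^{a}=c^{r}$, a direct calculation forces $b=a^{p^{e}(p-1)/2}c^{k}$ for some $k$ coprime to $p$; replacing the generator $c$ of $\langle c\rangle$ by $c^{k}$ (which preserves all the relations of $G_{1}(p,e)$ because $r$ is a primitive root) standardises this to $b=a^{p^{e}(p-1)/2}c$. The remaining freedom of replacing $a$ by any other generator $a^{j}$ of the vertex stabilizer $\langle a\rangle$, with $j\in\Z_{p^{e}(p-1)}^{*}$, yields exactly the parametrised family $\M_{1}(p,e,j)$.

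For the converse and uniqueness I would argue inside the abstract group $G_{1}(p,e)$. A direct computation shows that $x=a^{p-1}$ and $y=x^{b}$ form a $p^{e}$-isobicyclic triple $(H,x,y)$ with $H=\langle x,y\rangle\triangleleft G_{1}(p,e)$, $G_{1}(p,e)/H\cong\AGL(1,p)$, and $C_{G_{1}(p,e)}(H)=\mathrm{Z}(H)$, the last equality following because $r$ has full order $p^{e}(p-1)$ modulo $p^{e+1}$, so no nontrivial coset of $\langle c\rangle$ centralises $H$. Lemma~\ref{cos} then confirms that each $\M_{1}(p,e,j)$ is a regular embedding of $K_{p[p^{e}]}$. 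Finally, any isomorphism between two maps in the family corresponds to an element of $\Aut(G_{1}(p,e))$ carrying $(a^{j_{1}},b)$ to $(a^{j_{2}},b)$; a short analysis of $\Aut(G_{1}(p,e))$, using that the relation $c^{a}=c^{r}$ pins down the action of $a$ on the unique cyclic normal subgroup of order $p^{e+1}$, shows that fixing $b$ forces trivial action on the $(p-1)$-part of $\langle a\rangle$, and hence $j_{1}=j_{2}$, giving the enumeration of $p^{e-1}(p-1)\phi(p-1)$ maps stated in Table~1.
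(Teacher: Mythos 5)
Your overall skeleton (produce $c$ with $c^{a}=c^{r}$, $r$ a primitive root mod $p^{e+1}$, identify $G$ with $G_{1}(p,e)$, then apply Lemma~\ref{cos} and analyse isomorphisms) matches the paper, but the step on which everything hinges --- the existence of a cyclic subgroup $\lg c\rg$ of order $p^{e+1}$ that is \emph{normal} in $G$ and complements $\lg a\rg$ --- is exactly where your argument has a genuine gap. You propose to find $c$ inside a Sylow $p$-subgroup $P$ by taking a $\sigma$-eigenvector in $P/\Phi(P)$ for $\sigma=a^{p^{e}}$ and lifting it, and then to ``arrange $\lg c\rg\triangleleft\lg a,c\rg$'' by ``an adjustment exploiting that $H$ is a regular $p$-group.'' Neither half of this is a proof: a $\sigma$-invariant line in $P/\Phi(P)$ does not lift to a $\sigma$-invariant cyclic subgroup of order $p^{e+1}$ in general (the preimage of the line is a subgroup of index $p$ in $P$, not a cyclic group), and no mechanism is given for making $\lg c\rg$ invariant under conjugation by $x=a^{p-1}$, i.e.\ under $H$, which is what normality in $G=\lg a,c\rg$ actually requires. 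The paper closes this gap by a completely different device: since some $g\in P\setminus H$ of order $p^{e+1}$ generates a group regular on the vertices, $G=\lg a\rg\lg g\rg$ is a product of two cyclic groups, so $G'$ is abelian by Ito's theorem; semiregularity gives $G'\cap\lg a\rg=1$, and the Conder--Isaacs result that $G'/(G'\cap\lg a\rg)$ embeds in $\lg b\rg$ then forces $G'$ to be cyclic. The element $c:=a^{p^{e}(p-1)/2}b$ satisfies $c^{2}=[a^{p^{e}(p-1)/2},b]\in G'$, whence $c\in G'$ and in fact $G'=\lg c\rg$, so normality comes for free and $b=a^{p^{e}(p-1)/2}c$ holds by definition rather than by your separate lifting computation. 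Without some substitute for this (or a genuinely completed version of your Sylow argument), your identification $G\cong G_{1}(p,e)$ is not established.

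A secondary weakness: your justification that $r$ has full order $p^{e}(p-1)$ in $\Z_{p^{e+1}}^{*}$ is also not carried out. The quotient $G/H\cong\AGL(1,p)$ only pins down $r\bmod p$, and ``tracking the action through the filtration $\lg c\rg>\lg c^{p}\rg>\cdots$ via Lemma~\ref{p4}'' is not an argument. The paper's reason is one line and you should use it: $\lg a\rg$ is core-free, so $c^{a^{i}}\neq c$ for all $i\not\equiv 0\pmod{p^{e}(p-1)}$, which says precisely that the order of $r$ is $p^{e}(p-1)=|\Z_{p^{e+1}}^{*}|$. The same core-freeness observation is what you are implicitly missing in the uniqueness step as well: the clean computation is $\sigma(c)=c$ (since $j_{2}$ is odd) followed by $c^{r^{j_{2}}}=\sigma(c)^{\sigma(a^{j_{1}})}=c^{r^{j_{1}}}$, giving $j_{1}\equiv j_{2}\pmod{p^{e}(p-1)}$ because $r$ is a primitive root; your phrase ``trivial action on the $(p-1)$-part of $\lg a\rg$'' does not by itself yield the congruence modulo the full modulus $p^{e}(p-1)$.
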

\begin{proof}  The proof is divided into two steps.

\vskip 3mm (1) Determination of the group $G$.

 \vskip 3mm
 Recalling that for $p\ge 5$,
\begin{equation*}
 G_{1}(p,e)=\langle a,c|a^{p^{e}(p-1)}=c^{p^{e+1}}=1, c^{a}=c^{r}\rangle,
\end{equation*}
 where $r$ is a given generator of $\Z_{p^{e+1}}^*$. If we allow $p=3$ for the groups $G_1(p,e)$ and maps $\M_1(p,e,j)$, then $G_1(3,e)$ and $\M_1(3,e,j)$ are exactly $G_3(1,e)$ and $\M_3(1,e,j)$ by choosing $r=2$ respectively. Therefore,  we allow $p=3$ in the following arguments.

Since  $\Exp(P)=p^{e+1}$, there exists an element $g$ of order $p^{e+1}$ in $G\setminus H$.  Clearly $\langle g\rangle$ permutates  the $p$ parts of $\Gamma$ and hence is regular on $V(\G)$.  Since $G_{\gamma_{11}}=\langle a\rangle$, we get  $\langle a\rangle\bigcap\langle g\rangle=1$ and $G=\langle a\rangle\langle g\rangle$, a product of two cyclic groups.
Then $G'$ is abelian  by an Ito's theorem in \cite{Ito}.  Thus $G'$ acts semiregularly on $V(\G)$, from which we have $G'\cap \lg a\rg =G\bigcap G_{\g_{11}}=1$. Furthermore, by \cite[Corollary C]{Conder} we know that $G'/(G'\cap \lg a\rg)$ is  isomorphic to a subgroup of $\lg b\rg $, which implies
 that $G'$ is cyclic.
Set $c=a^{\frac{p^{e}(p-1)}{2}}b$. Then $c^2=a^{\frac{p^{e}(p-1)}{2}}ba^{\frac{p^{e}(p-1)}{2}}b=[a^{\frac{p^{e}(p-1)}{2}}, b]\in G'$ and hence $c\in G'$. Since $G'$ is cyclic, $\lg c\rg \lhd G$ and thus $\lg c\rg \lg a\rg \le G$. From
 $G=\langle a,b\rangle=\langle a,c\rangle$, we get $G=\lg c\rg \lg a\rg$. In particular, $G'=\lg c\rg .$

\vskip 2mm
Set  $c^{a}=c^{r}$. Since $\lg a\rg $ is core-free, $c^{r^i}=c^{a^i}\ne c$ for any $i\not\equiv  0(\mod p^{e}(p-1))$. Therefore, $\Z_{p^{e+1}}^{*}=\langle r\rangle$.  Take two such $r$ and $r'$ and denote the corresponding groups by $G(r)$ and $G(r')$.
Set $r=r'^s$ for some integer $s$.  Then the mapping $\s :a\to a^s$, $c\to c$ gives an isomorphism from $G(r)$ to $G(r')$. Therefore, $r$ can be chosen to be any given generator  of $\Z_{p^{e+1}}^*.$

Now $G$ satisfies all the defining relations of  $G_{1}(p,e)$ (take $r=2$ if $p=3$). A direct checking shows that  $|G_{1}(p,e)|=|G|$ and so $G\cong G_{1}(p,e)$.

\vskip 2mm
(2) Determination  the map $\M$.
 \vskip 2mm
By the above proof, we know that $\MM$  is isomorphic to one of the maps
\begin{equation*}
\M_1(p,e,j)=\M\big(G_1(p,e);a^{j},b\big)~\mbox{where}~j\in \Z_{p^e(p-1)}^*.
\end{equation*}
By Lemma~\ref{cos}, all of the maps $\M_1(p,e,j)$ are regular embeddings of $K_{p[p^{e}]}$.  Suppose that for  two parameters $j_{1}$ and $j_{2}$, $\M_1(p,e,j_{1})\cong\M_1(p,e,j_{2})$. Then there exists an automorphism $\sigma$ of $G_1(p,e)$ such that $\sigma(a^{j_{1}})=a^{j_{2}}$ and $\sigma(b)=b$. If follows that
\begin{equation*}
\s (c)=\s (a^{\frac{p^{e}(p-1)}{2}}b)=\s(a^{j_1\frac{p^{e}(p-1)}{2}}b)=a^{j_{2}\frac{p^{e}(p-1)}{2}}b=
a^{\frac{p^{e}(p-1)}{2}}b=c,
\end{equation*}
 and hence
\begin{equation*}
c^{r^{j_{2}}}=c^{a^{j_{2}}}=\big(\sigma(c)\big)^{\sigma(a^{j_{1}})}
=\sigma(c^{a^{j_{1}}})=\sigma(c^{r^{j_{1}}})=c^{r^{j_{1}}}.
\end{equation*}
Therefore $r^{j_{1}}\equiv r^{j_{2}}\pmod{p^{e+1}}$ and then
$j_{1}\equiv j_{2}\pmod{p^{e}(p-1)}$.
Thus the maps $\M_1(p,e,j)$ are uniquely determined by the parameter $j$.
\end{proof}

\begin{rem}\label{r}
In Theorem~\ref{m1}, $H=\langle x, w\rg , $ where $x=a^{p-1}$ and $w=c^{p}$. Now
\begin{equation*}
H=\langle x,w\mid x^{p^{e}}=w^{p^{e}}=1, w^x=w^{r^{p-1}}\rangle.
\end{equation*}
Since $\Z_{p^{e+1}}^{*}=\langle r\rangle$, $r^{p-1}$ is of order $p^e$ in $\Z_{p^{e+1}}$.
It is well known that the subgroup of order $p^e$ of $\Z_{p^{e+1}}^{*}$ is $\{ 1+pk\di  0\le k\le p^{e}-1\}$.
Therefore, $p\di\di (r^{p-1}-1).$ Hence,  if $e\ge 2$, then $H$ is nonableian and $f=1$.
\end{rem}
\begin{lem}
 Let $p\geq5$. Then $\M_1(p,e,j)$ are chiral maps of type $\{p^{e}(p-1), p^{e}(p-1)\}$ if $p\equiv 1\pmod 4$; and $\{\frac{p^{e}(p-1)}{2},p^{e}(p-1)\}$ if $p\equiv 3 \pmod 4$.
\end{lem}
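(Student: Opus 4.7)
My plan is to compute $|a^j|$ and $|a^j b|$ inside $G_1(p,e)$ directly, and then to rule out orientation-reversing automorphisms. Write $N = p^e(p-1)$ and $b = a^{N/2}c$; then $a^j b = a^m c$ with $m \equiv j + N/2 \pmod N$. The valency $|a^j| = N$ is immediate because $j \in \Z_N^*$. For the face length, the relation $c^a = c^r$ yields by induction on $k$ the normal form $(a^m c)^k = a^{km}\, c^{(r^{mk}-1)/(r^m-1)}$; so $(a^m c)^k = 1$ iff $N \mid km$ and $p^{e+1} \mid (r^{mk}-1)/(r^m-1)$.

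The first step is to locate $\gcd(m, N)$ in terms of $p \bmod 4$. Since $N/2 \equiv 0$ modulo $p^e$ and modulo every odd prime divisor of $p-1$, only the $2$-part distinguishes the two cases. When $p \equiv 1 \pmod 4$, $v_2(p-1) \ge 2$, so $N/2$ is even, $m$ is odd, and $\gcd(m,N) = 1$, giving $|a^m| = N$. When $p \equiv 3 \pmod 4$, $v_2(p-1) = 1$, so $N/2$ is odd, $m$ is even, and $\gcd(m,N) = 2$, giving $|a^m| = N/2$. Setting $k_0 := |a^m|$, the crux is to verify $p^{e+1} \mid (r^{mk_0}-1)/(r^m-1)$. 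Here I would invoke the lifting-the-exponent lemma: since $r$ is a primitive root modulo $p^{e+1}$ with $e \ge 1$, $r$ is in particular primitive modulo $p^2$, so $v_p(r^{p-1}-1) = 1$, and consequently $v_p(r^n-1) = 1 + v_p(n)$ when $(p-1) \mid n$ and $v_p(r^n-1) = 0$ otherwise. Because $N/2 \equiv (p-1)/2 \pmod{p-1}$ and $(p-1)/2$ is not coprime to $p-1$ for $p \ge 5$ while $\gcd(j, p-1) = 1$, we have $(p-1) \nmid m$, so $v_p(r^m - 1) = 0$. On the other hand $m k_0$ is a multiple of $N$ with $v_p(m k_0) = e$, so $v_p(r^{m k_0}-1) = e+1$. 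The quotient is therefore divisible by $p^{e+1}$, hence $(a^m c)^{k_0} = 1$ and $|a^m c| = k_0$. This yields type $\{N, N\}$ when $p \equiv 1 \pmod 4$ and $\{N/2, N\}$ when $p \equiv 3 \pmod 4$.

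For chirality, I would argue by contradiction. Suppose $\sigma \in \Aut(G_1(p,e))$ satisfies $\sigma(a^j) = a^{-j}$ and $\sigma(b) = b$. Since $\gcd(j, N) = 1$, the first condition forces $\sigma(a) = a^{-1}$; combined with $\sigma(a^{N/2}c) = a^{N/2}c$ this forces $\sigma(c) = c$. Applying $\sigma$ to the defining relation $c^a = c^r$ then demands $c^r = c^{a^{-1}} = c^{r^{-1}}$, i.e.\ $r^2 \equiv 1 \pmod{p^{e+1}}$, contradicting the fact that $r$ has order $N = p^e(p-1) \ge 20$ in $(\Z/p^{e+1})^*$ for $p \ge 5$. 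Hence no such $\sigma$ exists, and the map is chiral. The main obstacle is the $p$-adic valuation bookkeeping in the middle step; once that is in place, the type computation and the chirality argument are both routine manipulations of the presentation.
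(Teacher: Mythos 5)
Your proof is correct and, for the type computation, follows essentially the same route as the paper: both rewrite $a^jb$ as $a^{l}c$ with $l\equiv j\pm\frac{p^e(p-1)}{2}$, use the normal form $(a^{l}c)^i=a^{li}c^{(r^{li}-1)/(r^{l}-1)}$, and reduce the two cases to the parity of $l$; you merely make explicit the $p$-adic valuation bookkeeping ($v_p(r^n-1)=1+v_p(n)$ when $(p-1)\mid n$, and $p\nmid r^{l}-1$ since $(p-1)\nmid l$) that the paper leaves implicit. The only divergence is chirality: the paper gets it in one line from the uniqueness statement of Theorem~\ref{m1} (namely $\M_1(p,e,j_1)\cong\M_1(p,e,j_2)$ iff $j_1\equiv j_2\pmod{p^e(p-1)}$, together with $j\not\equiv-j$), whereas you rederive it directly by showing that an automorphism fixing $b$ and inverting $a$ would fix $c$ and force $r^2\equiv1\pmod{p^{e+1}}$, contradicting the order of $r$; both arguments are sound, yours being self-contained and the paper's shorter.
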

\begin{proof}
Since $j\not\equiv-j\pmod{p^{e}(p-1)}$ for all $j\in\Z_{p^{e}(p-1)}^{*}$, we have $\M_1(p,e,j)$ is not isomorphic to $\M_1(p,e,-j)$ and hence $\M_1(p,e,j)$ are chiral maps.

Set $l=j-\frac{p^e(p-1)}2$. Since $p\geq 5$, we get $p\nmid r^l-1$. Then for any integer $i$,
\begin{equation*}
(a^jb)^i=(a^lc)^i=a^{li}c^{1+r^{l}+\cdots+r^{(i-1)l}}=a^{li}c^{\frac{r^{li}-1}{r^l-1}}.
\end{equation*}
Noting that $l$ is odd
if $p\equiv 1\pmod 4$ and even if $p\equiv 3\pmod 4$, we have that the order of $a^jb$ is $p^e(p-1)$  if $p\equiv 1\pmod 4$ and $\frac{p^e(p-1)}2$ if $p\equiv 3\pmod 4$. It follows that the maps $\M_1(p,e,j)$ has the type $\{p^{e}(p-1), p^{e}(p-1)\}$ if $p\equiv 1\pmod 4$ and $\{\frac{p^{e}(p-1)}{2},p^{e}(p-1)\}$ if $p\equiv 3\pmod 4$.
\end{proof}
\subsection{$\Exp(P)=p^{e}$}
In this subsection, we discuss the case $\Exp(P)=p^{e}$. Before giving the main results, we prove two lemmas for later use.
\begin{lem}
\label{p5} $N:=\langle x^{p^{e-f}},z\rangle \unlhd G$.
\end{lem}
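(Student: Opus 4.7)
The plan is to pass to the quotient by the commutator subgroup $H'=\langle z^{p^{f}}\rangle$ of $H$. Because $H'$ is characteristic in $H$ and $H\unlhd G$, we have $H'\unlhd G$; and since $H'\leq N$, the normality of $N$ in $G$ is equivalent to the normality of $\bar N:=N/H'$ in $\bar G:=G/H'$. (If $f=e$ then $H$ is abelian, $H'=1$, and $N=H$ is trivially normal in $G$, so I may assume $f<e$.)

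The key observation is that $\bar H:=H/H'$ is an explicit abelian group: modulo $H'$ the defining relation $z^{x}=z^{1+p^{f}}$ collapses to $\bar z^{\bar x}=\bar z$, and a direct calculation using the normal form $x^{i}z^{j}$ in $H$ gives $|\bar x|=p^{e}$, $|\bar z|=p^{f}$ and $\langle\bar x\rangle\cap\langle\bar z\rangle=1$, so $\bar H=\langle\bar x\rangle\times\langle\bar z\rangle\cong\Z_{p^{e}}\times\Z_{p^{f}}$. Under this decomposition the image $\bar N=\langle\bar x^{p^{e-f}},\bar z\rangle$ can be identified with the $p^{f}$-torsion subgroup $\Omega_{f}(\bar H):=\{\bar h\in\bar H\mid \bar h^{p^{f}}=1\}$, because an element $\bar x^{i}\bar z^{j}$ has order dividing $p^{f}$ precisely when $p^{e-f}\mid i$ (the condition on $j$ being automatic).

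Once this identification is in hand, the lemma follows immediately: $\Omega_{f}$ is a characteristic subgroup of any abelian group, and $\bar H\unlhd\bar G$, so $\bar N\unlhd\bar G$, and pulling back gives $N\unlhd G$. The argument uses only the metacyclic presentation of $H$ and the normality $H\unlhd G$; in particular, no input from the hypothesis $\Exp(P)=p^{e}$ of this subsection is needed. The only slightly delicate point is verifying $\langle\bar x\rangle\cap\langle\bar z\rangle=1$ in $\bar H$ via the normal form of $H$, which is a short computation.
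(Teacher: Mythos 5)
Your proof is correct and follows essentially the same route as the paper: both pass to the quotient $G/H'$, identify $N/H'$ with the $p^{f}$-torsion subgroup of $H/H'\cong\Z_{p^{e}}\times\Z_{p^{f}}$, and conclude by characteristicity in the normal subgroup $H/H'$. The extra details you supply (the normal-form verification and the remark that the case $f=e$ is trivial) are accurate but not a different argument.
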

\begin{proof}  Set $N=\lg x^{p^{e-f}}, z\rg$. Since $H'=\langle z^{p^{f}}\rangle\le N$ and $H/H'=\lg x, z\rg/H'\cong \Z_{p^e}\times \Z_{p^{f}}$,
we have $N/H'=\{ gH'\in H/H'\mid (gH')^{p^f}=H'\}$, which is  characteristic
in $H/H'$ and hence is normal in $G/H'$. It follows that $N\unlhd G$.
\end{proof}

\begin{lem}\label{p6} Suppose that $x^{g}=x^{i}z^{j}$ and  $z^{g}=x^{k}z^{l}$ for some $g\in P\setminus H$. Then
\begin{enumerate}
  \item  $p^{e-f}\mid k$, $p^{e-f}|(i-1)$, $p\nmid j$ and $l\equiv 1\pmod p$;
  \item  one can set $x^{g}=xz$ by reselecting $z$.
\end{enumerate}

\end{lem}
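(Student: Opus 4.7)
The plan is to establish (1) one congruence at a time, then do (2) by direct substitution.

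First, for the divisibility $p^{e-f}\mid k$: since $N=\langle x^{p^{e-f}}, z\rangle$ is normal in $G$ by Lemma \ref{p5} and $z\in N$, the conjugate $z^g$ must lie in $N$, and every element of $N$ has the form $x^{sp^{e-f}}z^{t}$, so the $x$-exponent $k$ of $z^g=x^kz^l$ is divisible by $p^{e-f}$. Next, for $p^{e-f}\mid(i-1)$: the conjugation by $g$ defines an automorphism $\phi$ of $H$ taking $x\mapsto x^iz^j$, $z\mapsto x^kz^l$, so $\phi$ must preserve the defining relation $z^x=z^q$; thus $(x^kz^l)^{x^iz^j}=(x^kz^l)^q$. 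Using the conjugation rule $(x^az^b)^{x^cz^d}=x^az^{bq^{c}+d(1-q^a)}$ derived from $zx=xz^q$, and the power rule of Eq.~(\ref{M}), the $x$-parts match (reconfirming $p^{e-f}\mid k$), and since $p^{e-f}\mid k$ forces $q^{k}\equiv 1\pmod{p^e}$, comparison of $z$-parts reduces to $lq^{i}\equiv lq\pmod{p^e}$. As $\phi(z)$ has order $p^e$ we need $\gcd(l,p)=1$, hence $q^{i-1}\equiv 1\pmod{p^e}$; because $q=1+p^f$ has multiplicative order exactly $p^{e-f}$ in $(\Z/p^e)^*$, we conclude $p^{e-f}\mid(i-1)$.

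For the mod-$p$ congruences $l\equiv 1\pmod p$ and $p\nmid j$: reduce $\phi$ to the Frattini quotient $H/\Phi(H)\cong \F_p^2$ with basis $\bar x,\bar z$. In the nonabelian case $f<e$, $p\mid k$, so the matrix of $\bar\phi$ is upper triangular $\bigl(\begin{smallmatrix}i&j\\0&l\end{smallmatrix}\bigr)\bmod p$. Since $g\in P$ is a $p$-element, $\bar\phi$ is a $p$-element of $GL_2(\F_p)$; an upper-triangular $p$-element has diagonal entries that are $p$-power roots of unity in $\F_p^*$, hence both equal to $1$. To upgrade this to $p\nmid j$, I would argue by contradiction: if $p\mid j$, then $\bar\phi$ is the identity on $H/\Phi(H)$, so $[g,H]\le\Phi(H)$, forcing $P/\Phi(H)$ to be abelian of rank three; I would then derive a contradiction by combining this with the structural facts that $C_G(H)=Z(H)\subseteq H$ and that $G/H\cong\AGL(1,p)$ acts with $\bar a$ of order $p-1$ generating $\Aut(\bar g)$ in $P/H$, so that the relation $aga^{-1}\equiv g^r\pmod H$ imposes $(r-1)j\equiv 0\pmod p$ with $r\ne 1\bmod p$ unless the off-diagonal part of $\bar\phi$ is non-zero in the correct basis.

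For part (2), set $z':=x^{i-1}z^{j}$. Since $p\nmid j$, one has $\langle x,z'\rangle=\langle x,z^j\rangle=\langle x,z\rangle=H$. I would verify $(z')^{p^e}=1$ and $(z')^x=(z')^q$ by direct computation using Eq.~(\ref{M}): because $p^{e-f}\mid(i-1)$ gives $q^{i-1}\equiv 1\pmod{p^e}$, every geometric sum of the form $1+q^{i-1}+q^{2(i-1)}+\cdots$ collapses to a multiple of $p^e$ modulo $p^e$, or to $q$ modulo $p^e$, exactly as required for the relations to hold. Then $x\cdot z'=x^{i}z^{j}=x^g$, achieving the desired form $x^g=xz$ after renaming $z'\mapsto z$.

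The hard part will be the assertion $p\nmid j$: the purely internal constraints on $\Aut(H)$ from $\phi$ being a $p$-element together with $p^{e-f}\mid k$ and $p^{e-f}\mid(i-1)$ only pin down $i\equiv l\equiv 1\pmod p$; excluding $p\mid j$ needs a global argument invoking the presence of the $\AGL(1,p)$-complement through $a$ (in particular, using that $a$ centralizes $\langle x\rangle$ but acts with a generator of $\F_p^*$ on $P/H$) or, equivalently, ruling out that $Z(P)$ could be an "untwisted" one-dimensional subspace of $H$ given the isobicyclic interplay of $x$ and $y=x^b$. The other three parts of (1), and the reselection in (2), should then be routine verifications.
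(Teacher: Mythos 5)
Your handling of three of the four assertions in (1), and of part (2), essentially coincides with the paper's: $p^{e-f}\mid k$ comes from the normality of $N=\langle x^{p^{e-f}},z\rangle$ (Lemma~\ref{p5}); $p^{e-f}\mid(i-1)$ comes from conjugating the relation $z^x=z^q$ by $g$ and using that $q=1+p^f$ has order $p^{e-f}$ modulo $p^e$; $l\equiv1\pmod p$ comes from the fact that the (triangular) matrix of $\overline{g}$ on $H/\Phi(H)\cong\Z_p^2$ is a nontrivial $p$-element, hence unipotent; and (2) is exactly the substitution $z\mapsto x^{i-1}z^j$, whose compatibility with $z^x=z^q$ follows from $q^{i-1}\equiv1\pmod{p^e}$. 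All of that is correct and is what the paper does.

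The genuine gap is precisely where you flag it: the claim $p\nmid j$ is never actually proved. You only list candidate strategies, and the one you sketch does not close. If $p\mid j$ then indeed $[g,H]\le\Phi(H)$ and $P/\Phi(H)$ is abelian of order $p^{3}$, but it need not have rank three: $g^{p}$ may lie in $H\setminus\Phi(H)$, so $P/\Phi(H)$ can be $\Z_{p^{2}}\times\Z_p$, which is $2$-generated and produces no contradiction with $P=\langle x,g\rangle$; and your alternative route through $C_G(H)=\mathrm{Z}(H)$ and the relation $aga^{-1}\equiv g^{r}\pmod H$ is not carried out to any concrete conclusion. Moreover, $p\nmid j$ is simply false for abstract extensions of $H$ by $\Z_p$ (take $P=H\times\Z_p$), so no argument internal to $\Aut(H)$ can work; some input from the embedding is unavoidable. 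The paper's input is a one-line geometric observation, which is the concrete form of the ``isobicyclic interplay'' you allude to: $\langle x\rangle=H_{\gamma_{11}}$ and $\langle x^{g}\rangle=H_{\gamma_{11}^{g}}$ are the $H$-stabilizers of two \emph{adjacent} vertices, since $g\in P\setminus H$ permutes the parts nontrivially; by arc-regularity $\langle x\rangle\cap\langle x^{g}\rangle=1$, so $\langle x\rangle\langle x^{g}\rangle=H$ by order counting, and therefore $\overline{x}$ and $\overline{x}^{\overline{g}}=\overline{x}\,\overline{z}^{\,j}$ must be linearly independent in $H/\Phi(H)\cong\Z_p^{2}$. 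That is exactly $p\nmid j$, and once the action of $\overline{g}$ is known to be nontrivial your eigenvalue argument then delivers $l\equiv1\pmod p$. Supplying this observation is the missing idea; the rest of your proposal is routine and correct.
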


\begin{proof} (1) By lemma \ref{p5}, we have $p^{e-f}\mid k$ and hence $p\nmid l$. It follows that $x^{kq}=x^{k}$ and $x^{k}\in Z(H)$. Then
\begin{equation*}
\left\{
  \begin{array}{l}
    (z^{g})^{q}=(x^{k}z^{l})^{q}=x^{kq}z^{lq}=x^{k}z^{lq}; \\
    (z^{q})^{g}=(z^{x})^{g}=(z^{g})^{x^{g}}
=(x^{k}z^{l})^{x^{i}z^{j}}=x^{k}(z^{l})^{x^{i}}
=x^{k}z^{lq^{i}}.
  \end{array}
\right.
\end{equation*}
Therefore we have $x^{k}z^{lq}=x^{k}z^{lq^{i}}$ and then $lq\equiv lq^{i}\pmod {p^{e}}$. Noting that $q=1+p^{f}$ and $p\nmid l$, we have $p\nmid lq$ and hence $q^{i-1}\equiv 1\pmod {p^{e}}$.
By~\cite[Lemma 6]{JNS1}, we get $p^{e-f}|(i-1)$.

Write $\overline{G}=G/\Phi(H)$. Then  $\overline{x}^{\overline{g}}=\overline{x}^{i-1}\overline{x}\overline{z}^{j}
=\overline{x}\overline{z}^{j}$,  $\overline{z}^{\overline{g}}=\overline{x}^{k}\overline{z}^{l}=\overline{z}^{l}$.
Since $x$ fixes the vertex $\gamma_{11}$ and $g$ moves  away any part of $\Gamma$,  we have $\overline{x}^{\overline{g}}\neq \overline{x}$. It follows that $\overline{g}$ can be represented on $\overline{H}\cong \Z_p^2$ as a matrix
$\left(\begin{array}{lr}
1 &  0\\
 j & l \\
\end{array}
\right)\in \GL(2,p)$ with respect to the basis $\{\overline{x},\overline{z}\}$. Since the order of $\overline{g}$ is $p$, we have $l\equiv 1\pmod p$ and $p\nmid j$.

\vspace{3mm}

(2)  Since $x^g=x (x^{i-1}z^j)$ and
$(x^{i-1}z^{j})^{x}=x^{i-1}(z^{j})^{x}=x^{i-1}z^{jq}=(x^{i-1}z^{j})^{q}$,
one may get the desired conclusion by replacing $z$ by $x^{i-1}z^{j}$.
\end{proof}

\begin{theorem}\label{ml} Suppose that $H$ is non-abelian and $\Exp(P)=p^{e}$. Then
 $p=3$ and $f=e-1$.
\end{theorem}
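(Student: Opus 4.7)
The plan is to exploit the exponent bound $\Exp(P)=p^e$ by computing $g^p$ in two ways and comparing. By Lemma~\ref{p6}, I may normalize a generator $g\in P\setminus H$ so that $x^g=xz$ and $z^g=x^kz^l$ with $p^{e-f}\mid k$ and $l\equiv 1\pmod p$. Since $|P:H|=p$, we have $u:=g^p\in H$; write $u=x^Cz^D$. The hypothesis $\Exp(P)=p^e$ forces $|u|\mid p^{e-1}$, which using the regularity of the metacyclic $p$-group $H$ and the description $\Omega_{e-1}(H)=\langle x^p,z^p\rangle$ becomes equivalent to $p\mid C$ and $p\mid D$.

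Next, iterating the action of $g$ via the formulas (\ref{M}) and applying Lemma~\ref{p4} (which collapses the geometric sums $S_n=(q^n-1)/(q-1)$ modulo $p^e$ whenever $p^{e-f}\mid n$), the sequences defined by $x^{g^i}=x^{A_i}z^{B_i}$ and $z^{g^i}=x^{A_i'}z^{B_i'}$ satisfy a linear recurrence modulo $p^e$:
\[
\binom{A_{i+1}}{B_{i+1}}\equiv M\binom{A_i}{B_i}\pmod{p^e},\qquad M=\begin{pmatrix}1 & k\\ 1 & l\end{pmatrix},
\]
with the analogous equation for $(A_i',B_i')$; so the matrix $M^p$ records both $x^{g^p}$ and $z^{g^p}$ modulo $p^e$. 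On the other hand, direct computation in $H$ gives $x^u=xz^{D(1-q)}$ and $z^u=z^{q^C}$. Equating yields $(M^p)_{11}\equiv 1$, $(M^p)_{12}\equiv 0$, $(M^p)_{21}\equiv -Dp^f$, $(M^p)_{22}\equiv q^C\pmod{p^e}$, and the requirements $p\mid D$ and $p\mid C$ become the divisibilities $p^{f+1}\mid(M^p)_{21}$ and $p^{f+1}\mid(M^p)_{22}-1$ respectively.

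Finally, I would expand $M^p=(I+N)^p$ with $N=M-I$, noting that $N^2\equiv 0\pmod p$ because $p\mid k$ and $p\mid l-1$, so $N^i$ gains $p^{\lfloor i/2\rfloor}$ divisibility. For $p\ge 5$, every summand beyond $pN$ in the binomial expansion contributes a multiple of $p^2$ to $(M^p)_{21}$, leaving $(M^p)_{21}\equiv p\pmod{p^2}$; this contradicts $p^{f+1}\mid(M^p)_{21}$ since $f\ge 1$. For $p=3$, the extra summand $N^3$ contributes $k+(l-1)^2$ to $(M^3)_{21}$, and cancellation of the leading $3$ modulo $9$ then requires $k\equiv 6\pmod 9$, which combined with $p^{e-f}\mid k$ forces $e-f\le 1$, i.e., $f=e-1$. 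The main obstacle is the $p$-adic bookkeeping throughout the expansion of $(I+N)^p$, which demands simultaneous control via Lemma~\ref{p4}, the regularity of $H$, and the divisibility constraints coming from Lemma~\ref{p6}.
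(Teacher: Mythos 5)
Your proposal is correct and follows essentially the same route as the paper's own proof: normalize via Lemma~\ref{p6} so the conjugation action on $H/\Phi(H)$-coordinates is encoded by the matrix $M=\left(\begin{smallmatrix}1&k\\1&l\end{smallmatrix}\right)$, show $\left(M^p\right)_{21}$ equals the $z$-exponent of $x^{g^p}$ modulo $p^e$ using Lemma~\ref{p4}, compute $g^p\in H$ directly to force $p^{f+1}\mid (M^p)_{21}$, and derive the contradiction $(M^p)_{21}\equiv p\pmod{p^2}$ for $p\ge 5$ while extracting $p\,\|\,k$ (hence $f=e-1$) from the cube for $p=3$. The paper's argument (with $c=a^{p^e(p-1)/2}b$ in place of your $g$ and $s,t$ in place of $k,l$) is the same computation step for step.
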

\begin{proof} Set $c=a^{\frac{p^e(p-1)}2}b$, $x^{c^{k}}=x^{u_{k}}z^{v_{k}}$ and $z^{c^{k}}=x^{s_{k}}z^{t_{k}}$ for $k\geq 1$.
By lemma \ref{p6}, one can set $u_{1}=v_{1}=1$,  $s_{1}=s$ and $t_{1}=t$
where  $p^{e-f}|s$ and $t\equiv1\pmod{p}$. Particularly, $x^s\in \mathrm{Z}(H)$. Now we prove the theorem by the following three steps.

\vskip 3mm
(1) Show that
\begin{equation}\label{e1}
{\footnotesize \left(
  \begin{array}{cc}
    u_{k} & s_{k} \\
    v_{k} & t_{k} \\
  \end{array}
\right)\equiv
\left(
  \begin{array}{cc}
    1 & s\\
    1 & t \\
  \end{array}
\right)^{k} }\pmod{p^{e}}.
\end{equation}

\vskip 2mm
We proceed the proof by induction on $k$.  The assertion is trivial if $k=1$. Let $k\ge 2$.
Then we have
\begin{equation*}
(xz)^{u_{k-1}}=x^{u_{k-1}}z^{(q^{u_{k-1}}-1)/(q-1)}\quad\mbox{and}\quad (x^{s}z^{t})^{v_{k-1}}=x^{sv_{k-1}}z^{tv_{k-1}}.
\end{equation*}
By Lemma~\ref{p6}., we have that $p^{e-f}$ divide both $u_{k-1}-1$ and $s_{k-1}$.
Then by Lemma~\ref{p4}., we get
\begin{equation*}
(q^{u_{k-1}}-1)/(q-1)\equiv u_{k-1}\pmod{p^{e}}\quad\mbox{and}\quad
(q^{s_{k-1}}-1)/(q-1)\equiv s_{k-1}\pmod{p^{e}}.
\end{equation*}
Therefore,
\begin{equation*}
\begin{array}{lll}
  x^{c^{k}}\hspace{-2mm}&=&\hspace{-2mm}(x^{u_{k-1}}z^{v_{k-1}})^{c}=(x^{c})^{u_{k-1}}(z^{c})^{v_{k-1}}
  = (xz)^{u_{k-1}}(x^{s}z^{t})^{v_{k-1}} \\
   &=&\hspace{-2mm}x^{u_{k-1}}z^{(q^{u_{k-1}}-1)/(q-1)}
   x^{sv_{k-1}}z^{tv_{k-1}}=
  x^{u_{k-1}+sv_{k-1}}z^{(q^{u_{k-1}}-1)/(q-1)
+tv_{k-1}}\\
&=&\hspace{-2mm}x^{u_{k-1}+sv_{k-1}}z^{u_{k-1}+tv_{k-1}}. \end{array}
\end{equation*}
 Similarly, we get $z^{c^{k}}=x^{s_{k-1}+st_{k-1}}z^{s_{k-1}+tt_{k-1}}$.
It follows that
\begin{equation*}
\left(\begin{array}{cc} u_{k}&s_{k}  \\v_{k} &t_{k} \\ \end{array}\right)\equiv \left(\begin{array}{cc} 1&s  \\ 1&t \\ \end{array}\right)\left(\begin{array}{cc} u_{k-1}&s_{k-1}  \\ v_{k-1}&t_{k-1} \\ \end{array}\right)\pmod{p^{e}}.
\end{equation*}
Then we get the conclusion by employing the inductive hypothesis.

\vskip 3mm
(2) Show that $p\|v_{p}$ if $p\geq 5$.

Suppose that $p\ge 5$. Set $A=\left(\begin{array}{cc} 0& s \\1&t-1\\ \end{array}\right)$. Noting that $p$ divide both $s$ and $t-1$. We have
\begin{equation*}
A^2\equiv 0\pmod p\quad\quad\mbox{and}\quad\quad A^4\equiv 0\pmod{p^2}.
\end{equation*}
By Eq.(\ref{e1}), we have
\begin{equation*}
\left(
  \begin{array}{cc}
    u_{p} & s_{p} \\
    v_{p} & t_{p} \\
  \end{array}
\right)=(E+A)^p\equiv E+pA=\left(
  \begin{array}{cc}
    1 & ps \\
    p &1+p(t-1)\\
  \end{array} \right)\pmod{p^2},
\end{equation*}
from which we have $p\|v_{p}$.

\vskip 3mm
(3) Show that $p=3$ and $f=e-1.$

\vskip 2mm
Set $c^{p}=x^{i}z^{j}$. Since $\Exp(P)=p^e$, we have that $p$ divides both $i$ and $j$.
Then we have
\begin{equation*}
x^{c^{p}}=x^{x^{i}z^{j}}=x^{z^{j}}=xz^{-jq}z^{j}=xz^{-jp^{f}}\quad\mbox{and}\quad z^{c^{p}}=z^{x^{i}z^{j}}=z^{x^{i}}=z^{q^{i}}.
\end{equation*}
It follows that
\begin{equation}\label{4}
u_p\equiv  1,~v_p\equiv  -jp^{f},~s_p\equiv  0,~t_p\equiv  q^{i}\pmod{p^{e}}.
\end{equation}
Since $p\mid j$, we have $p^{2}|v_{p}$. Combining with the result of \emph{Step 2}, we get
$p=3$. It can be straightforward to calculate that
 \begin{equation}\label{5}
\left(\begin{array}{cc}
 1 &s  \\
   1 &t \\
   \end{array}
   \right)^{3}
\equiv
\left(
\begin{array}{cc}
 1+s(t+2) &  s(1+t+t^2+s)\\
   1+t+t^2+s &s+2st+t^3 \\
    \end{array}
     \right)\pmod{3^{e}}.
     \end{equation}
By Eq.(\ref{4}) and Eq.(\ref{5}), we have
\begin{equation*}
    \left(
\begin{array}{cc}
 1+s(t+2) &  s(1+t+t^2+s)\\
   1+t+t^2+s &s+2st+t^3 \\
    \end{array}
     \right)\equiv
      \left(
\begin{array}{cc}
 1 &  0\\
   -3^fj&(1+3^{f})^i \\
    \end{array}
     \right)
     \pmod{3^{e}}.
\end{equation*}
Particularly,
\begin{equation}\label{eq3}
    1+t+t^2+s\equiv-3^fj\pmod{3^{e}}
\end{equation}
Since $3\mid (t-1)$ and $3\mid j$, we have $3\|(1+t+t^{2})$ and $9\mid (-3^fj)$. Then by Eq.(\ref{eq3}), we get $3\|s$. Recalling that $3^{e-f}|s$, we have $f=e-1$.
\end{proof}

\begin{theorem} \label{abelian} Suppose that $p\ge 5$, $H$ is abelian and $\Exp(P)=p^{e}$. Then $\MM$ is isomorphic to one of the maps
$\MM_2(p,j)$ which are uniquely determined by the parameter $j\in \Z_{p-1}^*$.  Moreover, all of the maps $\MM_2(p,j)$ are orientable  regular embeddings of $K_{p[p]}$ and they are chiral maps with the type $\{p(p-1), p(p-1)\}$ if $p\equiv 1\pmod 4$ and $\{\frac{p(p-1)}{2},p(p-1)\}$ if $p\equiv 3 \pmod 4$.
\end{theorem}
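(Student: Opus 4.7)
The plan is to proceed in two stages: first show that the hypotheses force $e=1$ (pinning the underlying graph as $K_{p[p]}$), then identify $G\cong G_{2}(p)$ and enumerate the maps. The decisive step is the reduction to $e=1$, which I tackle first. Pick $g\in P\setminus H$; since $H$ is abelian we have $f=e$ in~(\ref{H}) and hence $z^{x}=z$, so Lemma~\ref{p6} together with an appropriate reselection of $z$ allows me to assume $x^{g}=xz$ and $z^{g}=x^{p\alpha}z^{1+p\beta}$ for some $\alpha,\beta\in\Z_{p^{e-1}}$. The conjugation action of $g$ on $H\cong\Z_{p^{e}}\times\Z_{p^{e}}$ is then represented by the matrix
\[
M=\begin{pmatrix}1 & p\alpha\\ 1 & 1+p\beta\end{pmatrix}\in\GL(2,\Z_{p^{e}}).
\]
Because $g^{p}\in H$ and $H$ is abelian, $g^{p}$ centralises $H$, so $M^{p}=I$. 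Now assume $e\ge 2$ for contradiction and reduce modulo $p^{2}$: a direct calculation gives
\[
(M-I)^{2}\equiv p\bigl(\begin{smallmatrix}\alpha & 0\\ \beta & \alpha\end{smallmatrix}\bigr)\pmod{p^{2}},\qquad (M-I)^{3}\equiv p\bigl(\begin{smallmatrix}0 & 0\\ \alpha & 0\end{smallmatrix}\bigr)\pmod{p^{2}},\qquad (M-I)^{k}\equiv 0\pmod{p^{2}}\text{ for }k\ge 4.
\]
Every $\binom{p}{k}$ with $1\le k\le p-1$ has $p$-valuation exactly~$1$, so for $p\ge 5$ the binomial expansion of $M^{p}$ collapses to $M^{p}\equiv I+p(M-I)\pmod{p^{2}}$; but the $(2,1)$-entry of $p(M-I)$ is $p\not\equiv 0\pmod{p^{2}}$, contradicting $M^{p}=I$. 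Hence $e=1$.

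With $e=1$, $P$ has order $p^{3}$, exponent $p$ and is nonabelian (since $x^{g}=xz\ne x$), so $P$ is the Heisenberg group. Putting $w=x^{-1}$ and renaming $g$ as $c$, I obtain $P=\lg w,z,c\mid w^{p}=z^{p}=c^{p}=1,\,[w,z]=1,\,w^{c}=wz,\,z^{c}=z\rg$. Setting $h=a^{p}$, this element has order $p-1$ and centralises $x$ (hence $w$); the relation $w^{c}=wz$ then forces $h$ to act on $\lg z\rg$ and on $\lg c\rg$ by the same scalar $t\in\Z_{p}^{*}$, and $\Core_{G}(\lg a\rg)=1$ forces $t$ to have order exactly $p-1$ (otherwise $\lg h^{d}\rg$ for some $d<p-1$ would centralise $P$, hence be normal in $G$, contradicting core-freeness of $\lg a\rg$). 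Renaming $h$ as $g$, these relations match the defining presentation of $G_{2}(p)$, and $t$ can be preassigned to be any generator of $\Z_{p}^{*}$. Lemma~\ref{cos} produces the embeddings $\M_{2}(p,j)=\M(G_{2}(p);wg^{j},cg^{(p-1)/2})$ of $K_{p[p]}$ for $j\in\Z_{p-1}^{*}$; and an argument identical to that in Theorem~\ref{m1}---any isomorphism $\M_{2}(p,j_{1})\cong\M_{2}(p,j_{2})$ induces $\sigma\in\Aut(G_{2}(p))$ fixing $b=cg^{(p-1)/2}$, hence $c$ and $g$, and sending $a^{j_{1}}\mapsto a^{j_{2}}$, forcing $j_{1}\equiv j_{2}\pmod{p-1}$---establishes uniqueness.

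Chirality follows because $j\not\equiv -j\pmod{p-1}$ for every $j\in\Z_{p-1}^{*}$, so no automorphism of $G_{2}(p)$ sends $a^{j}\mapsto a^{-j}$ while fixing $b$. The type is read off from the order of $ab=wg^{j}cg^{(p-1)/2}$ in $G_{2}(p)$: moving all $g$-factors to the right using $w^{g}=w$, $z^{g}=z^{t}$ and $c^{g}=c^{t}$, then collecting using $w^{c}=wz$ and $[w,z]=1$, yields an element whose order is $p(p-1)$ when $p\equiv 1\pmod 4$ and $p(p-1)/2$ when $p\equiv 3\pmod 4$, giving the stated types. The main obstacle, as already emphasised, is Stage~1: the identity $M^{p}=I$ has to be leveraged to rule out $e\ge 2$, and this needs three ingredients working in concert---$H$ abelian (to guarantee $M^{p}=I$), the normalised form $x^{g}=xz$ (so that $M-I$ has a unit entry and $p(M-I)$ genuinely survives mod $p^{2}$), and $p\ge 5$ (so that all middle terms $\binom{p}{k}(M-I)^{k}$ with $2\le k\le p-1$ die mod $p^{2}$). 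For $p=3$ the analogous computation admits the non-trivial solution $\alpha\equiv -1\pmod 3$, which is precisely why $p=3$ receives the separate, richer treatment in Section~4.
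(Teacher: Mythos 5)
Your proposal is correct in substance, but the decisive step --- forcing $e=1$ --- is carried out by a genuinely different computation from the paper's. The paper never touches a general $g\in P\setminus H$: it works with the specific element $c=a^{p^{e}(p-1)/2}b$, whose action on $H=\lg x\rg\times\lg y\rg$ is the matrix $\mathrm{C}=\bigl(\begin{smallmatrix}0&-1\\1&v\end{smallmatrix}\bigr)$, derives $v\equiv 2\pmod p$ from the fact that a matrix of order $p$ in $\GL(2,p)$ has eigenvalue $1$, and then expands $\mathrm{C}^{p}=(\mathrm{A}+rp\mathrm{B})^{p}$ modulo $p^{2}$, which requires evaluating $\sum_{i+j=p-1}\mathrm{A}^{i}\mathrm{B}\mathrm{A}^{j}$ and the power sums $\sum i$, $\sum i^{2}$. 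Your choice of an arbitrary $g\in P\setminus H$ normalised so that $M-I$ is nilpotent with a unit $(2,1)$-entry collapses the binomial expansion to $M^{p}\equiv I+p(M-I)\pmod{p^{2}}$ in one line; this is shorter, and it isolates cleanly where $p\ge5$ enters (the middle terms dying mod $p^{2}$, with $p=3$ admitting the escape $\alpha\equiv-1$). Both arguments rest on the same mechanism: $g^{p}\in H$ abelian forces $M^{p}=I$, which is impossible mod $p^{2}$ once $e\ge2$.

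Two caveats. First, your normalisation cannot be read off Lemma~\ref{p6} verbatim: in the abelian case $f=e$, so the conclusions $p^{e-f}\mid k$ and $p^{e-f}\mid(i-1)$ are vacuous, and the lemma's own derivation of $l\equiv1\pmod p$ tacitly uses $p\mid(i-1)$. What you actually need is the observation that $\bar g$ induces a nontrivial unipotent element of $\GL(2,p)$ on $H/\Phi(H)$ with $\bar x^{\bar g}\neq\bar x$: this gives $p\nmid j$ (so $z$ may be replaced by $x^{i-1}z^{j}$ to achieve $x^{g}=xz$), and then $(M-I)^{2}\equiv0\pmod p$ forces $p\mid k$ and $l\equiv1\pmod p$ for the new $z$. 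This is a small, repairable gap. Second, the paper's proof of this theorem consists \emph{only} of the reduction to $e=1$; the presentation $G_{2}(p)$, the maps $\M_{2}(p,j)$, their uniqueness, chirality and type are all quoted from the classification of regular embeddings of $K_{p[p]}$ in \cite{DKN}. Your Stage~2 rebuilds this from scratch, which is legitimate but sketchy at two points: from $w^{c}=wz$ you only get $c^{h}=c^{t}v$ for some $v\in H$, so $c$ must still be adjusted to make $c^{g}=c^{t}$ hold exactly; and the order of $ab=wg^{j}\cdot cg^{(p-1)/2}$ is asserted rather than computed. Either write out those two computations or cite \cite{DKN} as the paper does.
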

\begin{proof}  Suppose that $p\ge 5$, $H$ is abelian and $\Exp(P)=p^{e}$.
By the classification of orientably-regular embeddings of $K_{p[p]}$ in \cite{DKN}, it suffices to show that $e=1$.

Recalling the notations:  $\lg a\rg =G_{\gamma_{11}}$, $b$ reverses  the arc $(\gamma_{11},\gamma_{21})$, $x=a^{p-1}$, $y=x^{b}$ and $H=\langle x,y\rangle$.  Considering the conjugacy action of $a$ on $H$, we have $x^{a}=x$ and set $y^{a}=x^{s}y^{t}$ for integers $s$ and $t$. Then
\begin{equation*}
y=y^{x}=y^{a^{p-1}}=x^{s(1+t+\cdots+t^{p-2})}y^{t^{p-1}},
\end{equation*}
and hence
\begin{equation*}
s(1+t+\cdots+t^{p-2})\equiv 0\pmod{p^{e}}~~~\mbox{and}~~~t^{p-1}\equiv 1\pmod{p^{e}}.
\end{equation*}
For  any $i\in [p-2]$,  noting that $a^i$ moves away the block $\Delta_2$ and $H_{\gamma_{21}}=\langle(a^{p-1})^b\rangle=\langle y\rangle$, we have $\langle y^{a^{i}}\rangle\bigcap\langle y\rangle=1$. Therefore,
$t$ is of  order $p-1$ modulo $p^{e}$.
It follows that  $t^{\frac{p-1}{2}}\equiv-1\pmod{p^{e}}$ and then
\begin{equation*}
(t-1)\big(1+t+\cdots+t^{\frac{p^{e}(p-1)}{2}-1}\big)=
t^{\frac{p^{e}(p-1)}{2}}-1\equiv-2\pmod{p^{e}}.
\end{equation*}
Therefore $p\nmid (t-1)$ and hence
\begin{equation*}
1+t+\cdots+t^{\frac{p^{e}(p-1)}{2}-1}\equiv\frac{2}{1-t}\pmod{p^{e}}.
\end{equation*}
Let $c=a^{\frac{p^{e}(p-1)}{2}}b$ and $v=\frac{2s}{1-t}$. Then
\begin{equation}\label{action}
x^{c}=y~\mbox{and}~y^{c}=\Big(x^{s\big(1+t+\cdots+t^{\frac{p^{e}(p-1)}{2}-1}\big)}
y^{t^{\frac{p^{e}(p-1)}{2}}}\Big)^{b}
=(x^{v}y^{-1})^{b}=x^{-1}y^{v}.
\end{equation}
The conjugacy action of $G$ on $H$ gives a unfaithful homomorphism $\pi $ from $G$ to $\Aut(H)$. By Eq.(\ref{action}),  $\pi(c)$ is represented  as a matrix
\begin{equation*}
\mathrm{C}= \left(
              \begin{array}{cc}
                0 & -1 \\
                1 & v \\
              \end{array}
            \right)
\in\GL(2,\Z_{p^{e}})
\end{equation*}
with respect to two generators  $x$ and $y$.
Set $\overline{G}=G/H$. Then $\overline{G}=\lg \ola, \olc \rg \cong \AGL(1,p)$. Since the product of two different involutions in $\AGL(1,p)$ must be of order $p$, the order of $\overline{c}$ is $p$ and hence $c^{p}\in H$.
Therefore we have
\begin{equation}\label{identity}
 \mathrm{C}^p=\left(
             \begin{array}{cc}
               1 & 0 \\
               0 & 1 \\
             \end{array}
           \right)
\pmod{p^e}.
\end{equation}
It follows that
 $\mathrm{C}^p=\left(
             \begin{array}{cc}
               1 & 0 \\
               0 & 1 \\
             \end{array}
           \right)
\pmod{p}$.
Noting that any matrix of order $p$ in $\GL(2,p)$ has the eigenvalue 1, one gets
$v\equiv 2\pmod p$.
Set
\begin{equation*}
v-2=rp,~
\mathrm{A}=\left(
              \begin{array}{cc}
                0 & -1 \\
                1 & 2 \\
              \end{array}
            \right)~\mbox{and}~\mathrm{B}=\left(
              \begin{array}{cc}
                0 & 0 \\
                0 & 1 \\
              \end{array}
            \right).
\end{equation*}
Then
\begin{equation*}
 \mathrm{C}^p=\big(\mathrm{A}+rp\mathrm{B}\big)^p\equiv \mathrm{A}^p+rp(\mathrm{A}^{p-1}\mathrm{B}+\mathrm{A}^{p-2}\mathrm{B}\mathrm{A}
 +\cdots+\mathrm{B}\mathrm{A}^{p-1})\pmod{p^{2}}.
\end{equation*}
It can be straightforward to check that
$\mathrm{A}^{k}=\left(
              \begin{array}{cc}
                1-k & -k \\
                k & k+1 \\
              \end{array}\right)$ for all $k\geq1$. Therefore
\begin{eqnarray*}
   &&\hspace{-6mm}  \\
   \mathrm{C}^{p}\hspace{-2mm}&\equiv&\hspace{-2mm}\mathrm{A}^p+
   rp\sum_{i+j=p-1}\mathrm{A}^{i}\mathrm{B}\mathrm{A}^{j}\\
      &=&\hspace{-2mm}\left(
        \begin{array}{cc}
          1-p & -p \\
          p & p+1 \\
        \end{array}
      \right)+
      rp\sum_{i+j=p-1}\left(
        \begin{array}{cc}
          1-i & -i \\
          i & i+1 \\
        \end{array}
      \right)
      \left(
        \begin{array}{cc}
          0 & 0 \\
          0 & 1 \\
        \end{array}
      \right)
      \left(
        \begin{array}{cc}
          1-j & -j \\
          j & j+1 \\
        \end{array}
      \right)\\
   &=&\hspace{-2mm}\left(
        \begin{array}{cc}
          1-p & -p \\
          p & p+1 \\
        \end{array}
      \right)+
      rp\sum_{i=0}^{p-1}\left(
        \begin{array}{cc}
          i^{2}-(p-1)i & i^{2}-pi \\
          -i^{2}+(p-2)i+p-1 & -i^{2}+(p-1)i+p \\
        \end{array}
      \right)\\
   &\equiv&\hspace{-2mm}\left(
        \begin{array}{cc}
          1-p & -p \\
          p & p+1 \\
        \end{array}
      \right)+
      rp\sum_{i=0}^{p-1}\left(
        \begin{array}{cc}
          i^{2}+i & i^{2} \\
          -i^{2}-2i-1 & -i^{2}-i \\
        \end{array}
      \right)\\
     &=&\hspace{-2mm}\left(
                       \begin{array}{cc}
                         1-p & -p \\
                         p & p+1 \\
                       \end{array}
                     \right)+rp
                     \left(
                       \begin{array}{cc}
                         \frac{p(p-1)(2p-1)}{6}+\frac{p(p-1)}{2}& \frac{p(p-1)(2p-1)}{6} \\
                         -\frac{p(p-1)(2p-1)}{6}-p^{2} & -\frac{p(p-1)(2p-1)}{6}-\frac{p(p-1)}{2} \\
                       \end{array}
                     \right)\\
     &\equiv&\hspace{-2mm}\left(
                            \begin{array}{cc}
                              1-p & -p \\
                              p & p+1 \\
                            \end{array}
                          \right)\\
     &=&\hspace{-2mm}\left(
                   \begin{array}{cc}
                     1 & 0 \\
                     0 & 1 \\
                   \end{array}
                 \right)
                 +\left(
                            \begin{array}{cc}
                              -p & -p \\
                              p& p\\
                            \end{array}
                          \right)\pmod{p^{2}}.
\end{eqnarray*}
Combining with Eq.(\ref{identity}), we get $e=1$.
\end{proof}

\section{Regular embeddings of $K_{3[n]}$}

In this section, we assume that $m=3$ and $n=3^{e}k\ge 2$ where  $3\nmid k$.
Set $\Gamma=K_{3[n]}$, with the vertex set
\begin{equation*}
V(\Gamma)=\Delta_{1}\bigcup\Delta_{2}\bigcup\Delta_{3}~~\mbox{where}~~
\Delta_{i}=\{\gamma_{i1},\gamma_{i2},\cdots,\gamma_{in}\}
\end{equation*}
and two vertices are adjacent if and only if they are in different $\Delta_{i}$. Let $\MM$ be an orientably-regular embedding of $\Gamma$ and $G=\Aut^{+}(\MM)=\lg a, b\rg $ where
$\langle a\rangle =G_{\gamma_{11}}$ and $(\gamma_{11},\gamma_{21})^{b}=(\gamma_{21},\gamma_{11})$. Set $a^{2}=x$, $y=x^{b}$ and $H=\langle x,y\rangle$. Then $\Aut_{0}^{+}(\MM)=H$. By Proposition~\ref{rth}, $H=Q\times K$ where $Q$ is a $3$-group and $K$ is an abelian $3'$- group.  Let $P$ be a Sylow 3-subgroup of $G$ and we divide the discussions into two subsections according to $\Exp(P)=3^{e+1}$ or $\Exp(P)=3^{e}$.

\subsection{$\Exp(P)=3^{e+1}$}

\begin{theorem}\label{m33} Suppose  that $\Exp(P)=3^{e+1}$. Then
 $\MM\cong   \MM_3(k,e,j)$ where $j\in \Z_{2k\cdot 3^e}^*$ and   $\MM_3(k,e,j_1)\cong \MM_3(k,e,j_2)$  if and only if $j_1\equiv j_2\pmod{2\cdot 3^e}$.
 Moreover, all the maps $\MM_3(k,e,j)$ are chiral regular embeddings of $K_{3[n]}$ with the type $\{3^{e+1}, 2\cdot3^{e}k\}$ and the number of such maps is $2\cdot 3^{e-1}$ up to isomorphism.
\end{theorem}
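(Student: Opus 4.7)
The plan is to mirror the strategy of Theorem~\ref{m1}, adjusted to the two distinguishing features: $G/H\cong S_3$ (which equals $\AGL(1,3)$, so Lemma~\ref{cos} still applies verbatim), and $H=Q\times K$ splits as a 3-group times an abelian 3'-group. First I would verify that, after an appropriate preliminary adjustment of the generators, $c:=a^{3^e}b$ has order $3^{e+1}$. Since $3^e$ is odd, $\bar c=\bar a\bar b$ in $G/H\cong S_3$, which has order 3; hence $c^3\in H$, and in fact $c^3\in Q$ because $\bar c$ is a 3-element and $H=Q\times K$. The hypothesis $\Exp(P)=3^{e+1}$, together with $\Exp(Q)=3^e$ (from the isobicyclic structure of $Q$), guarantees an order-$3^{e+1}$ element in $P\setminus Q$; such an element lies in a nontrivial $Q$-coset of $P$, and a standard adjustment brings it to the form $a^{3^e}b$, giving $|c|=3^{e+1}$ and $G=\langle a,c\rangle$ via $b=a^{-3^e}c$.

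Next I would derive the remaining defining relations of $G_3(k,e)$. The 3'-part $K=\langle x_1,y_1\rangle$ with $x_1=a^{2\cdot 3^e}$ and $y_1=x_1^b$ is abelian of order $k^2$ by Proposition~\ref{rth}, and the relation $y_1^a=x_1^{-1}y_1^{-1}$ follows from the fact that $a$ cyclically permutes the three parts of $\G$ (as in the treatment of $K_{m[p]}$ in \cite{DKN}: the product $y_1\cdot y_1^a\cdot y_1^{a^2}$ fixes each part and must be trivial in $K$). The key calculation is the derivation of $c^a=c^2 x_1^u y_1^{(1-3^e)u/2}$. Modulo $K\langle c^3\rangle$ one has $c^a\equiv c^{-1}=c^2$, since $\bar a$ is an involution inverting the order-3 element $\bar c$. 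Writing $c^a=c^2 h$ for some $h\in K\langle c^3\rangle$ and exploiting (i)~$c^{3^{e+1}}=1$, (ii)~the abelianness of $K$, (iii)~$y_1^a=x_1^{-1}y_1^{-1}$, and (iv)~the self-consistency $c^{a^{2n}}=c$ (which yields the congruence $u\cdot 3^e\equiv 1\pmod{k}$), one pins down $h=x_1^u y_1^{(1-3^e)u/2}$. The appearance of the exponent $(1-3^e)/2$ on $y_1$ is the delicate point; this is the main obstacle.

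With $G\cong G_3(k,e)$ established, the remaining claims follow in the standard way. Lemma~\ref{cos} (which applies verbatim since $G/H\cong\AGL(1,3)$) shows that each $\MM(G_3(k,e);a^j,b)$ is an orientably-regular embedding of $K_{3[n]}$ for $j\in\Z_{2\cdot 3^e k}^*$. For the isomorphism count, any $\sigma\in\Aut(G_3(k,e))$ fixing $b$ sends $a\mapsto a^j$ and hence $c\mapsto a^{j\cdot 3^e}b$; compatibility with $c^a=c^2 x_1^u y_1^{(1-3^e)u/2}$ makes $j$ relevant only modulo $2\cdot 3^e$, giving $\phi(2\cdot 3^e)=2\cdot 3^{e-1}$ equivalence classes. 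The valency is $|a|=2n$; a direct computation of $|ab|$ using $ab=a^{1-3^e}c$ together with the conjugation relation gives $|ab|=3^{e+1}$, so the type is $\{3^{e+1},2n\}$. Chirality is immediate from the classification, since $\MM_3(k,e,j)\cong\MM_3(k,e,-j)$ would require $-j\equiv j\pmod{2\cdot 3^e}$, i.e.\ $3^e\mid j$, which is impossible for $j\in\Z_{2\cdot 3^e k}^*$.
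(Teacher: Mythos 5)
Your overall architecture coincides with the paper's: pass to the quotients $G/Q$ and $G/K$, establish the presentation $G_3(k,e)$, invoke Lemma~\ref{cos}, and count isomorphism classes via automorphisms fixing $b$. But the proposal stops short exactly where the real work lies. The central gap is one you name yourself: you do not derive the relation $c^a=c^2x_1^uy_1^{\frac{1-3^e}{2}u}$ with $u3^e\equiv 1\pmod k$, calling the exponent $\frac{1-3^e}{2}$ ``the main obstacle.'' This relation is the group-theoretic heart of the theorem; the paper obtains it by first proving by induction the formulas for $(c^2x_1^uy_1^v)^i$ and $c^{a^{2i}}=c^{4^i}x_1^{iu}y_1^{-iu}$, then extracting $u3^e\equiv1\pmod k$ by computing $c^{a^{2\cdot3^e}}=c^{x_1}=cx_1y_1^{-1}$ in two ways, and $v\equiv\frac{1-3^e}{2}u\pmod k$ from the identity $c=c^c=c^{a^{3^e}b}$. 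Note that your consistency condition (iv), $c^{a^{2n}}=c$, is vacuous ($a^{2n}=1$) and yields nothing; the informative condition is $c^{a^{2\cdot3^e}}=c^{x_1}$. Without this relation the isomorphism count also collapses: the paper's proof that $a\mapsto a^j$, $b\mapsto b$ extends to an automorphism precisely when $j\equiv1\pmod{2\cdot3^e}$ is a page of verification that again hinges on the congruence $u3^e\equiv1\pmod k$, and the final count $\phi(2\cdot3^e)$ needs the extra observation that each admissible class mod $2\cdot3^e$ contains a representative coprime to $2\cdot3^ek$.

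Two further steps are asserted rather than proved, and one is false as written. First, $c^3\in Q$ does not follow from ``$\bar c$ is a $3$-element'': the order of $\bar c$ in $G/H$ being $3$ gives only $c^3\in H=Q\times K$, and killing the $K$-component requires the explicit computation $\widetilde c^{\,3}=1$ in $\widetilde G=G/Q$ using the presentation of the unique regular embedding of $K_{3[k]}$ from \cite{ZD}; similarly the lower bound $|c|\ge3^{e+1}$ comes from the order of $\olc$ in $G/K$ via Theorem~\ref{m1} applied with $p=3$, not from ``adjusting'' an arbitrary element of order $3^{e+1}$ in $P\setminus Q$ to the specific form $a^{3^e}b$ --- no such adjustment is available. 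Second, your derivation of $y_1^a=x_1^{-1}y_1^{-1}$ rests on two incorrect claims: $a$ does not cyclically permute the three parts (it stabilizes $\Delta_1$ and, having order $2$ modulo $H$, transposes the other two; the part-rotating element is $ab$), and ``fixes each part, hence trivial'' is not a valid inference, since every element of $H$ fixes each part setwise. Indeed, since $a^2=x\in H=Q\times K$ centralizes the abelian factor $K$, one has $y_1^{a^2}=y_1$, so your product equals $y_1^2y_1^a$, whose triviality would force $y_1^a=y_1^{-2}$ --- contradicting the relation you are trying to prove whenever $k>1$. The correct route is the paper's: read off $\widetilde y_1^{\,\widetilde a}=\widetilde x_1^{-1}\widetilde y_1^{-1}$ in $G/Q$ and conclude from $y_1^ay_1x_1\in Q\cap K=1$.
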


\begin{proof} Recall that
\begin{eqnarray*}
  G_{3}(k,e)\hspace{-2mm}&=&\hspace{-2mm}\langle a,b\mid a^{2\cdot3^{e}k}=b^{2}=1, c=a^{3^{e}}b, a^{2\cdot3^{e}}=x_{1}, x_{1}^{b}=y_{1}, [x_{1},y_{1}]=1,c^{3^{e+1}}=1, \\
  &&\hspace{10mm} y_{1}^{a}=x_{1}^{-1}y_{1}^{-1},
       c^{a}=c^{2}x_{1}^{u}y_{1}^{\frac{1-3^{e}}{2}u}\rangle,
\end{eqnarray*}
where $u3^e\equiv 1\pmod k$ if $k>1$ (note that $c^{a}=c^{2}$ if $k=1$). Now we divide the proof into three steps.

\vskip 3mm
(1) Show that $G\cong G_{3}(k,e)$.
\vskip 2mm

Let $x_{1}=a^{2\cdot3^{e}}$,  $y_{1}=x_{1}^{b}$ and  $c=a^{3^{e}}b$.  Then
$K=\langle x_{1}, y_{1}\rangle \cong \Z_{k}\times\Z_{k}$.

Set $\widetilde{G}=G/Q$ and  let  $\widetilde{\M}$ be the quotient map of $\M$ induced by  $Q$. Then $\Aut^{+}(\widetilde{\M})\cong \widetilde{G}$ and $\widetilde{\M}$ is an orientably-regular embedding of $K_{3[k]}$. By the classification of orientably-regular embeddings of $K_{3[k]}$ for $k$ coprime to 3 (see \cite[Lemma 5.2]{ZD}), we have
\begin{equation}\label{q1}
\widetilde{G}=\langle \widetilde{a}, \widetilde{b}\mid \widetilde{a}^{2k}=\widetilde{b}^{2}=(\widetilde{a}\widetilde{b})^3
=\widetilde{1}, \widetilde{a}^{2}=\widetilde{x}, \widetilde{x}^{\widetilde{b}}=\widetilde{y}, [\widetilde{x},\widetilde{y}]=\widetilde{1}, \widetilde{y}^{\widetilde{a}}=
\widetilde{x}^{-1}\widetilde{y}^{-1}\rangle.
\end{equation}
Thus we get  $\widetilde{y_{1}}^{\widetilde{a}}=
\widetilde{(y^{3^{e}})}^{\widetilde{a}}=
\widetilde{x^{-3^{e}}}\widetilde{y^{-3^{e}}}=
\widetilde{x}_{1}^{-1}\widetilde{y}_{1}^{-1}$ and then $y_1^ay_1x_1\in Q\cap K=1$, that is $y_1^a=x_1^{-1}y_1^{-1}$.

Set $\og=G/K$ and  let     $\overline{\MM}$  be the quotient map of $\M$ induced by  $K$. Then  $\Aut(\overline{\MM})=\og .$
Noting that  Theorem~\ref{m1} holds for $p=3$, we have
\begin{equation}\label{bar}
\og '=\lg \olc\rg\quad\mbox{and}\quad\overline{c}^{\overline{a}}=\overline{c}^{2}.
\end{equation}

Now  we  show two facts:

\vskip 3mm
{\it  Fact 1:} $\lg K, c\rg=K\rtimes\langle c\rangle\unlhd G$ with
$|\langle c\rangle|=3^{e+1},~x_1^c=y_1,~y_1^c=x_1^{-1}y_1^{-1}~\mbox{and}~[c^3, x_1]=[c^3, y_1]=1$.

\vskip 2mm
Since $\lg \olc\rg=\overline{G}' \unlhd \og$ and $K\unlhd G$, we have $\lg K, c\rg\unlhd G$.
Write $g=ab$. By Eq.(\ref{q1}), we get $\widetilde{g}^{3}=1$.
Noting that $\langle\widetilde{a}^{2}\rangle=\langle \widetilde{x}\rangle=\langle\widetilde{x_{1}}\rangle$, we get $\widetilde{a}^{3^{e}-1}\in\langle \widetilde{x_{1}}\rangle$. Let $\widetilde{a}^{3^{e}-1}=\widetilde{x_{1}}^{i}$ for some integer $i$. Then $\tilde{c}=\widetilde{a^{3^{e}}b}=\widetilde{a}^{3^{e}-1}
\widetilde{a}\widetilde{b}=\widetilde{x_{1}}^{i}\widetilde{g}$. Since
 $\widetilde{x_{1}}^{\widetilde{g}}=
\widetilde{x_{1}}^{\widetilde{a}\widetilde{b}}=
\widetilde{x_{1}}^{\widetilde{b}}=\widetilde{y_{1}}$ and $\widetilde{y_{1}}^{\widetilde{g}}=
\widetilde{y_{1}}^{\widetilde{a}\widetilde{b}}=
(\widetilde{x_{1}}^{-1}\widetilde{y_{1}}^{-1})^{\widetilde{b}}
=\widetilde{x_{1}}^{-1}\widetilde{y_{1}}^{-1}$,
we have
\begin{equation*}
(\widetilde{c})^{3} = \widetilde{g}^{3}(\widetilde{x_{1}}^{i})^{\widetilde{g}^{3}} (\widetilde{x_{1}}^{i})^{\widetilde{g}^{2}} (\widetilde{x_{1}}^{i})^{\widetilde{g}}
=\widetilde{x_{1}}^{i}\widetilde{x_{1}}^{-i}\widetilde{y_{1}}^{-i} \widetilde{y_{1}}^{i}=1
\end{equation*}
and hence $c^{3}\in Q$. Then $|\langle c\rangle|=3^{e+1}$, since $\Exp(Q)=3^{e}$ and the order of $\overline{c}$ is $3^{e+1}$ in $\og$.
Noting that $y_{1}^{a^{i}}=x_{1}^{-1}y_{1}^{-1}$ for odd $i$ and  $y_{1}$ for even $i$,  we get
\begin{equation*}
x_{1}^{c}=x_{1}^{a^{3^{e}}b}=x_{1}^{b}=y_{1}, \, y_{1}^{c}=y_{1}^{a^{3^{e}}b}=(x_{1}^{-1}y_{1}^{-1})^{b}
=x_{1}^{-1}y_{1}^{-1}~\mbox{and}~[c^3, x_1]=[c^3, y_1]=1.
\end{equation*}
Since $\gcd\{|K|,|\langle c\rangle|\}=1$ and $K\unlhd G$,  we have $\langle K,c\rangle=\langle K\rangle\rtimes\langle c\rangle$.

\vskip 3mm
{\it  Fact 2:} $G=\lg K, c\rg  \lg a\rg$ with $c^{a}=c^{2}x^{u}y^{\frac{1-3^{e}}{2}u}$ where $u 3^{e}\equiv 1\pmod k$.

\vskip 3mm
Since $G=\langle a,b\rangle=\langle a,c\rangle$, we have $G=\lg K, c\rg  \lg a\rg$.
By Eq.(\ref{bar}), we can set $c^{a}=c^{2}x_{1}^{u}y_{1}^{v}$ for some integers $u$ and $v$ for $k\geq2$. The remaining  is to show $u 3^{e}\equiv 1\pmod k$ and $v\equiv\frac{1-3^{e}}{2}u\pmod k$.

One can deduce the following formulas  by induction on $i$:
\begin{equation}\label{10}
(c^{2}x_{1}^{u}y_{1}^{v})^{i}=\left\{
  \begin{array}{ll}
    c^{2i}, &~i\equiv0\pmod 3 \\
    c^{2i}x_{1}^{u}y_{1}^{v}, &~i\equiv1\pmod 3 \\
    c^{2i}x_{1}^{v}y_{1}^{v-u}, &~i\equiv2\pmod 3
  \end{array}
\right.
\end{equation}
and
\begin{equation}\label{11}
    c^{a^{2i}}=c^{4^{i}}x_{1}^{iu}y_{1}^{-iu}.
\end{equation}
Then we have
\begin{equation*}
c^{a^{2\cdot 3^{e}}}=c^{4^{3^{e}}}x_{1}^{3^{e}u}y_{1}^{-3^{e}u}
=cx_{1}^{3^{e}u}y_{1}^{-3^{e}u}.
\end{equation*}
On the other hand, $c^{a^{2\cdot 3^e}}=c^{x_{1}}=c(x_{1}^{-1})^{c}x_{1}=cx_{1}y_{1}^{-1}$.
Therefore $u 3^{e}\equiv 1\pmod k$.

\vskip 3mm
By Eq.(\ref{11}), we have
\begin{equation}\label{12}
c^{a^{3^{e}-1}}=c^{2^{3^{e}-1}}x_{1}^{\frac{3^{e}-1}{2}u}y_{1}^{\frac{1-3^{e}}{2}u}.
\end{equation}
Note that
$2^{3^{e}}\equiv 2\pmod 3$ and $2^{3^{e}-1}\equiv 1\pmod 3$.
Then by Eq.(\ref{12})~and~Eq.(\ref{10}), we have
\begin{equation}\label{13}
c^{a^{3^{e}}}=
(c^{2^{3^{e}-1}}x_{1}^{\frac{3^{e}-1}{2}u}y_{1}^{\frac{1-3^{e}}{2}u})^{a}
=(c^{2}x_{1}^{u}y_{1}^{v})^{2^{3^{e}-1}}
x_{1}^{(3^{e}-1)u}y_{1}^{\frac{3^{e}-1}{2}u}
=c^{2^{3^{e}}}x_{1}y_{1}^{\frac{3^{e}-1}{2}u+v}
\end{equation}
and
\begin{equation}\label{14}
(c^{2^{3^{e}}}x_{1}y_{1}^{\frac{3^{e}-1}{2}u+v})^{2^{3^{e}}}=
c^{2^{2\cdot3^{e}}}x_{1}^{\frac{3^{e}-1}{2}u+v}
y_{1}^{-\frac{3^{e}+1}{2}u+v}
=cx_{1}^{\frac{3^{e}-1}{2}u+v}y_{1}^{-\frac{3^{e}+1}{2}u+v}.
\end{equation}
Since $c=a^{3^{e}}b$, we get $c^{b}=c^{a^{3^{e}}}$.
Then by Eq.(\ref{13}) and Eq.(\ref{14}), we have
 \begin{equation*}
c=c^{c}=c^{a^{3^{e}}b}=
(c^{2^{3^{e}}}x_{1}y_{1}^{\frac{3^{e}-1}{2}u+v})^{b}
=(c^{b})^{2^{3^{e}}}x_{1}^{\frac{3^{e}-1}{2}u+v}y_{1}
=cx_{1}^{(3^{e}-1)u+2v}y_{1}^{\frac{3^{e}-1}{2}u+v}.
\end{equation*}
Therefore $x_{1}^{(3^{e}-1)u+2v}y_{1}^{\frac{3^{e}-1}{2}u+v}=1$ and hence $v\equiv\frac{1-3^{e}}{2}u~\pmod k$.

\vskip 3mm
From the \emph{Fact 1} and \emph{Fact 2}, $G$ satisfies all the relations of $G_3(k,e)$ and then $G$  is an homomorphic image  of $G_3(k,e)$. A checking shows that $|G_3(k,e)|=|G|$.
Therefore $G\cong G_3(k,e)$.

\vskip 3mm (2) Determination of  $\M$.

\vskip 3mm Recall that $\MM_3(k,e,j)=\M\big(G_{3}(k,e);a^{j},b\big)$ where $j\in \Z_{2n}^{*}$. Since $G\cong G_{3}(k,e)$, we have $\M$ is isomorphic to one of the maps $\M_{3}(k,e,j)$.
If $\MM_3(k,e,j_{1})\cong\MM_3(k,e,j_{2})$ for two parameters
$j_{1},j_{2}\in\Z_{2n}^{*}$, then there exists an automorphism $\psi$ of $\Aut\big(G_{3}(k,e)\big)$ such that $\psi(a^{j_{1}})=a^{j_{2}}$ and $\psi(b)=b$. Clearly, $\psi$ induces an automorphism $\overline{\psi}$ of $\overline{G}$ such that $\overline{\psi}(\overline{a}^{j_{1}})=\overline{a}^{j_{2}}$ and $\overline{\psi}(\overline{b})=\overline{b}$. By  Proposition \ref{m1}, we get
$j_{1}\equiv j_{2}\pmod{2\cdot3^{e}}$.

Conversely, suppose  that $j_{1}\equiv j_{2}~\pmod{2\cdot3^{e}}$ for $j_1, j_2\in \Z_{2k\cdot 3^e}$.  Then $j=j_2j_1^{-1}\equiv 1\pmod{2\cdot 3^e}$.
It suffices to  show that the mapping $\psi:~a\mapsto a^{j},~b\mapsto b$ can be extended to an automorphism of $G_3(k,e)$. That is, $\psi$ can be extended to a bijection
preserving all the defining  relations of $G_3(k,e)$.
By the presentation of $G_3(k,e)$, we can set
\begin{equation*}
\psi(x_1)=x_1^j,\, \psi(y_1)=y_1^j\quad\mbox{and}\quad\psi(c)=(a^{j})^{3^e}b.
\end{equation*}
Then the four formulas $$\psi(a)^{2\cdot3^{e}k}=\psi(b)^{2}=1, \, [\psi(x_{1}),\, \psi(y_{1})]=1,\,
 \psi(x_1)^{\psi(a)}=\psi(x_1)\,
  \psi(y_1)^{\psi(a)}=\psi(x_1)^{-1}\psi(y_1)^{-1}$$
 clearly hold.
Since $$\psi(c)=(a^{j})^{3^e}b=a^{2\cdot 3^e\frac{j-1}2}a^{3^e}b=x_1^{\frac {j-1}2}c,$$ $$(x_1^{\frac {j-1}2}c)^{3}=x_{1}^{\frac {j-1}2}c^{2}(x_{1}^{\frac {j-1}2})^{c}x_{1}^{\frac {j-1}2}c=x_{1}^{\frac {j-1}2}c^{3}(y_{1}^{\frac {j-1}2}x_{1}^{\frac {j-1}2})^{c}=x_{1}^{\frac {j-1}2}c^{3}x_{1}^{-\frac {j-1}2}=c^{3},$$ we know
that $\psi(c)$ and $c$ have the same order.
Set $j=2\cdot3^{e}i+1$.
Then $a^{j}=x_{1}^{i}a$ and hence
\begin{equation*}
c^{a^{j}}=c^{x_{1}^{i}a}=\big(c(x_{1}^{-i})^{c}x_{1}^{i}\big)^{a}
=(cy_{1}^{-i}x_{1}^{i})^{a}=c^{2}x_{1}^{u}y_{1}^{\frac{1-3^{e}}{2}u}x_{1}^{2i}y_{1}^{i}
=c^{2}x_{1}^{u+2i}y_{1}^{\frac{1-3^{e}}{2}u+i}.
\end{equation*}
Noting that $\psi(c)^{2}=(x_{1}^{\frac {j-1}2}c)^{2}=x_{1}^{\frac {j-1}2}cy_{1}^{\frac {j-1}2}$, we have
\begin{equation}\label{15}
\psi(c)^{a^{j}}=(x_{1}^{\frac{j-1}{2}}c)^{a^{j}}=x_{1}^{\frac{j-1}{2}}c^{a^{j}}
=x_{1}^{\frac{j-1}{2}}c^{2}x_{1}^{u+2i}y_{1}^{\frac{1-3^{e}}{2}u+i}
=\psi(c)^{2}x_{1}^{u+2i}y_{1}^{\frac{1-3^{e}}{2}u-\frac{j-1}{2}+i}.
\end{equation}
Since $j=2\cdot3^{e}i+1$, $3^{e}u\equiv1~\pmod k$,
we have
\begin{equation*}
u+2i\equiv u+2i\cdot3^{e}u\equiv(1+2i\cdot3^{e})u\equiv uj\pmod k,
\end{equation*}
and
\begin{equation*}
\frac{1-3^{e}}{2}u-\frac{j-1}{2}+i\equiv\frac{1-3^{e}-3^{e}j+3^{e}+
2\cdot3^{e}i}{2}u\equiv j\frac{1-3^{e}}{2}u\pmod k.
\end{equation*}
Then by Eq.(\ref{15}), we have $\psi(c)^{a^{j}}=\psi(c)^{2}(x_{1}^{j})^{u}(y_{1}^{j})^{\frac{1-3^{e}}{2}u}$.
Thus,  $\psi$ can be indeed  extended to a bijection
preserving all the defining  relations of $G_3(k,e)$.
\vskip 2mm
In summary,   $\M_{4}(k,e,j_{1})\cong\M_{4}(k,e,j_{2})$ if and only if $j_{1}\equiv j_{2}~\pmod{2\cdot3^{e}}$.

\vskip 3mm (3) Determination of  the number and type of the resulting maps.
 \vskip 3mm

By Lemma \ref{cos}, all of the maps $\M_{3}(k,e,j)$ for $j\in \Z_{2n}^{*}$ are orientably-regular embeddings of $K_{2[n]}$.
By (2), $\MM_3(k,e,j_{1})\cong\MM_3(k,e,j_{2})$ if
and only if $j_1\equiv j_2\pmod{2\cdot 3^e}$. Noting that for $j\in \Z_{2n}^{*}$ with $(j, 2\cdot 3^e)=1$,   the set $\{ j+i \cdot 2\cdot 3^e\di 0\le i\le k-1 \}$ contains at least one number which is coprime to $2\cdot 3^ek$,  then we have that the number of maps in this family is $\phi(2\cdot 3^e)=2\cdot 3^{e-1}$.
With the same arguments  as in  \emph{Fact 1} of (1),
one may check $|\langle a^jb\rangle|=3^{e+1}$. Thus, all of the resulting maps have type $\{3^{e+1}, 2\cdot3^{e}k\}$. Clearly, they are all chiral.
\end{proof}

\subsection{$\Exp(P)=3^e$}
 The following theorem quoted from   \cite[Lemma 5.2]{ZD} gives a determination of the case when $H$ is abelian.
\begin{theorem} \label{abelian2} If $H$ is abelian, then
  \begin{equation*}
  \M\cong\M_3(k,e,0,l,j)
  \end{equation*}
where $(l,j)=(0,1)$ if $e=0$ and  $(l,j)=(0, 1)$,~$(1, 1)$~or~$(1, -1)$ if $e\geq1$.
The resulting maps $\M_3(k,e,0,0,1)$ and $\M_3(k,e,0,1,\pm1)$ have type $\{3, 2n\}$ and $\{9, 2n\}$ respectively, and all of them are orientably-regular embeddings of $K_{3[n]}$.
\end{theorem}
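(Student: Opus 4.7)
The plan is to identify $G = \langle a,b\rangle$ up to isomorphism with one of the groups $G_4(k,e,0,l)$, and then enumerate generating pairs. Since $(H,x,y)$ is $n$-isobicyclic with $H$ abelian, $H = \langle x\rangle \times \langle y\rangle \cong \Z_n \times \Z_n$ and $b$ acts on $H$ as the transposition $x \leftrightarrow y$. Because $x = a^2$ commutes with $a$, we have $x^a = x$; writing $y^a = x^s y^t$, the identity $y^{a^2} = y^x = y$ immediately yields $t^2 \equiv 1$ and $s(1+t) \equiv 0 \pmod n$. I would then use that the three subgroups $\langle x\rangle$, $\langle y\rangle$ and $\langle y^a\rangle = \langle x^s y^t\rangle$ are pairwise trivially intersecting---being the kernels of $H$ acting on the three parts---together with the regular action of $\langle a\rangle$ on the arcs at $\gamma_{11}$ to force $t \equiv -1 \pmod n$. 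Rescaling $y$ by a unit then gives $s \equiv -1 \pmod n$, recovering the relation $y^a = x^{-1}y^{-1}$ in the presentation of $G_4(k,e,0,l)$.

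Next I would analyze $c := (ab)^3$. Since $\overline{G} = G/H \cong S_3$ has $\overline{a}\overline{b}$ of order $3$, $c \in H$; and $c$ is fixed by conjugation by $ab$. Writing $c = x^u y^v$ and computing $c^{ab} = (c^a)^b$ via the now-known actions $y^a = x^{-1}y^{-1}$, $x^b = y$, $y^b = x$, one obtains the equations $u \equiv -v$ and $u \equiv 2v \pmod n$, so $3v \equiv 0 \pmod n$ and $u \equiv -v$. For $e = 0$ this forces $c = 1$; for $e \geq 1$ one has $v = \frac{n}{3}w$ for some $w \in \Z_3$, whence $c = x^{ln/3} y^{-ln/3}$ with $l = -w \in \{-1, 0, 1\}$. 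The two choices $l = \pm 1$ give isomorphic groups via the symmetry $x \leftrightarrow y$, so one may normalize to $l \in \{0, 1\}$. The hypothesis $\Exp(P) = 3^e$ enters to exclude the alternative structure of Theorem~\ref{m33}, where an element of order $3^{e+1}$ appears; a cardinality check then identifies $G$ with $G_4(k, e, 0, l)$.

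Finally, I would enumerate the maps $\M(G; a^j, b)$ up to isomorphism via $\Aut(G_4(k,e,0,l))$-orbits on valid generating pairs with the involution $b$ fixed. For $l = 0$ the assignment $a \mapsto a^{-1}$, $b \mapsto b$ extends to an automorphism of $G$, so the map is reflexible and unique with $(l,j) = (0,1)$ of type $\{3, 2n\}$. For $l = 1$ no such symmetry exists, and the two choices $j = \pm 1$ produce a chiral pair $(l, j) = (1, \pm 1)$ of type $\{9, 2n\}$ (since $|c| = 3$ forces $|ab| = 9$). That the resulting maps are orientably-regular embeddings of $K_{3[n]}$ is guaranteed by Lemma~\ref{cos}. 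The main obstacle I anticipate is the simultaneous normalization of $s$ and $t$ modulo $k$ and modulo $3^e$ in the decomposition $H = K \times Q$: this must be carried out coordinate-wise, combining the classification of $K_{3[k]}$ embeddings from \cite{ZD} for the $K$-part with a careful case analysis of the $Q$-part under the constraint $\Exp(P) = 3^e$.
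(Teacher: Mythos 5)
First, a point of comparison: the paper does not actually prove this statement at all --- it is quoted from \cite[Lemma 5.2]{ZD} (and the ``$\M_3(k,e,0,l,j)$'' in it is a typo for ``$\M_4(k,e,0,l,j)$''), so your argument is being measured against a citation rather than a proof. Your overall strategy is the natural one and closely mirrors what the paper does in the neighbouring cases (Theorem~\ref{abelian} for $p\ge 5$ with $H$ abelian, and Theorem~\ref{m3} for $Q$ nonabelian): pin down the action of $a$ and of $(ab)^3$ on $H=\langle x\rangle\times\langle y\rangle$, identify $G$ with $G_4(k,e,0,l)$ by a presentation-plus-order count, and then sort generating pairs into isomorphism classes, with Lemma~\ref{cos} certifying that the underlying graph is $K_{3[n]}$.

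There is, however, one genuine gap: the step ``rescaling $y$ by a unit then gives $s\equiv-1\pmod n$.'' You are not free to rescale $y$ alone: it is pinned as $y=(a^2)^b$ by the generating pair, and the only reparametrizations compatible with map isomorphism, namely $a\mapsto a^{j}$, $b\mapsto b$ with $j\in\Z_{2n}^*$, replace $(x,y)$ by $(x^{j},y^{j})$; a short computation shows that the relation $y^a=x^{s}y^{-1}$ then becomes $(y^{j})^{a^{j}}=(x^{j})^{s}(y^{j})^{-1}$ with the \emph{same} exponent $s$. So $s$ is an invariant of the map and cannot be normalized away; it must be computed. The missing relation is already in your hands: since $\overline{a}\,\overline{b}$ has order $3$ in $G/H\cong S_3$, we have $(ab)^3\in H$, and $H$ abelian forces $x^{(ab)^3}=x$; writing $y^a=x^sy^t$ and expanding gives $x^{(ab)^3}=x^{ts}y^{\,t+s^2}$, hence $ts\equiv 1$ and $t+s^2\equiv 0\pmod n$, which together with your $t^2\equiv1$ and $s(1+t)\equiv 0$ force $s\equiv t\equiv-1\pmod n$ in one stroke. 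This repair also dissolves the ``coordinate-wise normalization modulo $k$ and modulo $3^e$'' difficulty you anticipate in your last paragraph, which is an artifact of the invalid rescaling. With that fixed, the remainder of your argument --- the computation of $(ab)^3=x^{ln/3}y^{-ln/3}$, the reduction to $l\in\{0,1\}$, reflexibility for $l=0$, and the chiral pair $j=\pm1$ of type $\{9,2n\}$ for $l=1$ --- goes through.
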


If $H$ is nonabelian, then we have the following theorem.
\begin{theorem}\label{m3} Suppose that $H$ is nonabelian and $\Exp(P)=3^e$.
Then  $\MM$ is isomorphic to one of the maps $\MM_4(k, e,  1, l, j)$, where $l=0, \pm 1$ and $j=\pm 1$.
Moreover, all of the maps $\MM_4(k, e,  1, l, j)$ are chiral regular embeddings of $K_{3[n]}$ with the type $\{3, 2n\}$ if $l=0$ and $\{9, 2n)\}$ if $l=\pm 1$.
\end{theorem}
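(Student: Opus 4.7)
The strategy parallels that of Theorem~\ref{m33}: first identify the abstract group $G$, then enumerate the resulting algebraic maps, and finally read off their types and chirality. The novelty compared with the abelian case in Theorem~\ref{abelian2} is the appearance of the nontrivial commutator $[x,y]$ and the consequent new parameter $l$.

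For the group identification I would first invoke Theorem~\ref{ml} to conclude that, with $H$ nonabelian and $\Exp(P)=3^e$, the Sylow $3$-subgroup $Q$ of $H$ satisfies $f=e-1$, so $H'=Q'$ is cyclic of order $3$. Writing $[x,y]=x^{\alpha}y^{\beta}$ in the isobicyclic factorization, the order-$3$ condition on $[x,y]$ forces $(n/3)\mid\alpha$ and $(n/3)\mid\beta$, while the identity $[x,y]^b=[y,x]=[x,y]^{-1}$ evaluated modulo $H'$ gives $\beta\equiv-\alpha\pmod n$; nonabelianness together with a suitable rescaling of generators then allows $[x,y]=x^{n/3}y^{-n/3}$, which is the $i=1$ relation. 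Next, passing to the quotients $\widetilde G=G/Q$ (an abelian-$H$ map on $K_{3[k]}$, handled by Theorem~\ref{abelian2}) and $\overline G=G/K$ (a nonabelian map on $K_{3[3^e]}$, handled by direct calculation inside $\overline H$) and then lifting yields $y^a=x^{-1}y^{-1}$. Finally, since $(ab)H$ is a $3$-cycle in $G/H\cong S_3$, we have $(ab)^3\in H$; an explicit calculation using the two relations just established shows $(ab)^3\in H'$, which is cyclic of order $3$, so $(ab)^3=x^{ln/3}y^{-ln/3}$ for some $l\in\{0,1,-1\}$. An order comparison then gives $G\cong G_4(k,e,1,l)$.

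For enumeration, Lemma~\ref{cos} (whose hypotheses I would check for $(G_4(k,e,1,l);a,b)$) ensures each $\M_4(k,e,1,l,j)$ with $\gcd(j,2n)=1$ is a regular embedding of $K_{3[n]}$; a subsequent isomorphism analysis in the style of Theorem~\ref{m33}, tracing automorphisms of $G_4(k,e,1,l)$ that fix $b$, reduces $j$ to $\pm 1$. For the type, the order of $ab$ is $3$ when $l=0$ and $9$ when $l=\pm 1$ (because $(ab)^3$ then generates the cyclic group $H'$ of order $3$), giving types $\{3,2n\}$ and $\{9,2n\}$ respectively. For chirality, the assignment $a\mapsto a^{-1}$, $b\mapsto b$ would send $(ab)^3$ to $(a^{-1}b)^3$; a short computation using $x^{a^{-1}}=x$ and $y^a=x^{-1}y^{-1}$ shows the image differs in the sign of the $l$-coefficient plus corrections that obstruct extension to an automorphism, so the maps are chiral in every case.

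The main obstacle is the explicit computation of $(ab)^3$ inside $H$: one must carry the twisted commutator $[x,y]=x^{n/3}y^{-n/3}$ through iterated conjugations by $a$ and $b$, monitoring its interaction with both the abelian $3'$-factor $K$ and the metacyclic $Q$ with $f=e-1$. Pinning $(ab)^3$ down to $H'$ and distinguishing $l=1$ from $l=-1$ via the $b$-action is the technical heart of the argument; once complete, the remaining isomorphism, type, and chirality calculations follow the pattern already established in the proof of Theorem~\ref{m33}.
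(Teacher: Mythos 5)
Your plan follows essentially the same route as the paper's proof: Theorem~\ref{ml} (which the paper at this point miscites as Theorem~\ref{abelian}) gives $f=e-1$ for $Q$, hence $H'=\langle [x,y]\rangle\cong\Z_3$ is central; the $b$-action together with replacing $a$ by $a^{-1}$ normalizes $[x,y]=x^{n/3}y^{-n/3}$; the relations $y^a=x^{-1}y^{-1}$ and $(ab)^3=x^{ln/3}y^{-ln/3}$ come out of congruence computations that use $\Exp(P)=3^e$ to force $c^3=(ab)^3$ into $\mathrm{Z}(H)$; and Lemma~\ref{cos} plus the automorphism analysis ($a\mapsto a^{j}$, $b\mapsto b$ extends exactly when $j\equiv1\pmod 3$) complete the enumeration. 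The only structural difference is that the paper derives $y^a=x^{-1}y^{-1}$ by one direct computation in $G$ (solving the two systems of congruences coming from $x^{c^3}=x$ and from computing $y^x$ two ways), whereas you propose quotient-and-lift through $G/Q$ and $G/K$; that variant works, but note the $G/K$ step is not covered by any earlier cited result and amounts to the same direct calculation carried out inside $Q$.

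One concrete caveat: your chirality mechanism, ``the image differs in the sign of the $l$-coefficient,'' proves nothing when $l=0$, since then $(ab)^3=(a^{-1}b)^3=1$ and the relation $(ab)^3=1$ is perfectly preserved by $a\mapsto a^{-1}$, $b\mapsto b$. The genuine obstruction, uniformly in $l$, sits in the commutator relation: since $H'$ is central of order $3$, the putative automorphism sends $[x,y]$ to $[x^{-1},y^{-1}]=[x,y]$ but sends $x^{n/3}y^{-n/3}$ to its inverse, and $[x,y]\neq[x,y]^{-1}$. Equivalently, the criterion $j_1\equiv j_2\pmod 3$ that you already invoke for the $j$-enumeration rules out an isomorphism between the maps for $j=1$ and $j=-1$; route the chirality claim through that criterion rather than through the sign of $l$.
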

\begin{proof}
We divide the proof into two steps.

\vskip 3mm (1)~~Show that $G$ is isomorphic to one of the following groups
\begin{eqnarray*}
   G_{4}(k,e,1,l)\hspace{-2mm}&=&\hspace{-2mm}\langle a, b\mid a^{2n}=b^2=1, a^2=x, x^b=y, [x,y]=x^{\frac {n}3}y^{-\frac {n}3}, y^{a}=x^{-1}y^{-1},\\
    &&\hspace{9mm}(ab)^3=x^{\frac {ln}3}y^{-\frac {ln}3}\rangle,
 \end{eqnarray*}
 where~$l=0,\pm1$.

Set $\og=G/K$ and let $\overline{\MM}$  be the quotient map of $\M$ induced by $K$. Then  $\M$ is an orientably-regular embedding of $K_{3[3^{e}]}$ and $\Aut(\overline{\MM})=\og$. Clearly,  $\Exp(\overline{P})=\Exp(P)=3^e$ and $\overline{H}\cong Q$. By Theorem \ref{abelian}, we have that $\mathrm{Z}(Q)=\langle g^{3}\mid g\in Q\rangle\cong\Z_{3^{e-1}}\times\Z_{3^{e-1}}$ and $Q'\cong\Z_{3}$. Since $H=Q\times K$ and $K$ is abelian, we have $\mathrm{Z}(H)=\langle x^{3},y^{3}\rangle$ and $H'=\langle[x,y]\rangle\cong\Z_{3}$.
Set $[x,y]=x^{\frac{n}{3}i}y^{\frac{n}{3}j}$ where $i,j=0\;\mbox{or}\;\pm1$. Then from
  $$y^{-\frac{n}{3}j}x^{-\frac{n}{3}i}=[x,y^{-1}]=[y,x]=[x,y]^{b}=(x^{\frac{n}{3}i}y^{\frac{n}{3}j})^{b}=y^{\frac{n}{3}i}x^{\frac{n}{3}j},$$
 we get  $i=-j$ and hence $[x,y]=x^{\pm\frac{n}{3}}y^{\mp\frac{n}{3}}$. Noting that these two cases can be  interconvertible by replacing $a$ by $a^{-1}$, we can set $[x,y]=x^{\frac{n}{3}}y^{-\frac{n}{3}}$ without loss of any generalities. Therefore,
 $H$ has a presentation
\begin{equation*}
 H=\langle x,y\mid x^{n}=y^{n}=[x^{3},y]=[x,y^{3}]=1,~[x,y]=x^{\frac{n}{3}}y^{-\frac{n}{3}}\rangle.
\end{equation*}
Set $[x,y]=w$. One can check that
the multiplications and powers in $H$ are given by
\begin{equation}\label{chf}
(x^{i}y^{l})(x^{r}y^{d})=x^{i+r}y^{l+d}w^{-ld},\quad
(x^{i}y^{l})^{r}=x^{ri}y^{rl}w^{-\frac{r(r-1)}{2}il}.
\end{equation}

Since $G$ is an extension of $H$ by $G/H\cong S_{3}$ with  $a^{2}=x$, $b^{2}=1$ and $x^{b}=y$, it follows that  $G$ can be determined by the following relations
\begin{equation*}
y^a=x^uy^v,~~(ab)^3=x^{s}y^{t},
\end{equation*}
where  $u$, $v$, $s$ and $t$ are undetermined parameters. Set $c=ab$. Then
\begin{equation*}
x^{c}=x^{b}=y,~~ x^{c^{2}}=y^{c}=(x^{u}y^{v})^{b}=y^{u}x^{v}~\mbox{and}~
x^{c^{3}}=(y^{u}x^{v})^{c}=(y^{u}x^{v})^{u}y^{v}.
\end{equation*}
By Eq.(\ref{chf}), we have
\begin{equation*}
(y^{u}x^{v})^{u}=(x^{v}y^{u}w^{-uv})^{u}
=x^{uv}y^{u^{2}}w^{-\frac{u(u-1)}{2}uv-u^{2}v}=x^{uv}y^{u^{2}}w^{-\frac{u(u+1)}{2}uv}
\end{equation*}
and hence
\begin{equation*}
    x^{c^{3}}=x^{uv}y^{u^{2}+v}w^{-\frac{u(u+1)}{2}uv}
    =x^{uv-\frac{nu(u+1)}{6}uv}y^{u^{2}+v+\frac{nu(u+1)}{6}uv}.
\end{equation*}
On the other hand, since $\Exp(P)=3^{e}$, we know that 3 divides both $s$ and $t$. It follows that $c^3\in \mathrm{Z}(H)$ and hence $x^{c^{3}}=x$. Therefore
\begin{equation}\label{eqar1}
\left\{\begin{gathered}
    uv-\frac{nu(u+1)}{6}uv\equiv1\pmod n,\\
   u^{2}+v+\frac{nu(u+1)}{6}uv\equiv0\pmod n.
    \end{gathered}\right.
\end{equation}
Since
\begin{equation*}
y^{x}=y^{a^{2}}=(x^{u}y^{v})^{a}
=x^{u}(x^{u}y^{v})^{v}=x^{u+uv}y^{v^{2}}w^{-\frac{v(v-1)}{2}uv}
=x^{u+uv-\frac{nv(v-1)}{6}uv}y^{v^{2}+\frac{nv(v-1)}{6}uv}
\end{equation*}
and
\begin{equation*}
y^{x}=[x,y^{-1}]y=yw^{-1}=x^{-\frac{n}{3}}y^{1+\frac{n}{3}},
\end{equation*}
we have
\begin{equation}\label{eqar2}
\left\{\begin{gathered}
    u+uv-\frac{nv(v-1)}{6}uv\equiv-\frac{n}{3}\pmod n,\\
  v^{2}+\frac{nv(v-1)}{6}uv\equiv1+\frac{n}{3}\pmod n.
    \end{gathered}\right.
\end{equation}
By solving the equations
(\ref{eqar1})~and~(\ref{eqar2}), one can obtain $u\equiv v\equiv-1\pmod n$, that is   $y^{a}=x^{-1}y^{-1}$.
Since $c^{3}=(ab)^{3}=x^{s}y^{t}$ and $3$ divides  both $s$ and $t$, we have
\begin{equation*}
 x^{s}y^{t}=(x^{s}y^{t})^{ab}=[x^{s}(x^{-1}y^{-1})^{t}]^{b}=(x^{s-t}y^{-t})^{b}=x^{-t}y^{s-t}
\end{equation*}
and hence
\begin{equation*}
     s\equiv  -t(\mod{n}),\,
         t\equiv s-t(\mod n),
       \end{equation*}
that is  $3s\equiv3t\equiv0\pmod{n}$.
It follows that $(ab)^3=x^{-t}y^{t}=x^{\frac{ln}{3}}y^{-\frac{ln}{3}}$,
where $l=0~\mbox{or}~\pm1$.
Now we have proved that $G$ satisfies all the defining relations of $G_{4}(k,e,1,l)$. Checking directly,  one has $|G_{4}(k,e,1,l)|=|G|$.  Therefore $G\cong G_{4}(k,e,1,l)$.

\vskip 3mm (2) Determination  the map $\M$.

Take two parameters $j_{1},\,j_{2}\in\Z_{2n}^{*}$. Then one may verify that the mapping $a^{j_{1}}\mapsto a^{j_{2}}, \, b\mapsto b$ can be extended to an automorphism of $G_{4}(k,e,1,l)$ if and only if
~$j_{1}\equiv j_{1}\pmod 3$. Therefore, for given $k,e,l$,  $\M$ is isomorphic to one of the maps $\M_{4}(k,e, 1, l,j)$ where $j=\pm1$.

The remaining is to show  different $l$ give nonisomorphic  maps.
Suppose that for $l_1\ne l_2$, $\M_{4}(k,e, 1, l_1,j_1)$ is  isomorphic to $\M_{4}(k,e, 1, l_2, j_2)$.
  Then there exists an isomorphism $\psi$ from $G_4(k,e,1,l_1)$ to $G_4(k,e,1,l_2)$.
Set
\begin{eqnarray*}
   G_{4}(k,e,   ,l_1)&=&\hspace{-2mm}\langle a', b'\mid a'^{3^{e}}=b'^2=1, a'^2=x', x'^b=y', [x',y']=x'^{3^{e-1}}y'^{-3^{e-1}}, \\
    &&\hspace{10mm}y'^{a}=x'^{-1}y'^{-1},(a'b')^3=x'^{l_1\frac {n}3}y'^{-\l_1\frac {n}3}\rangle,
  \end{eqnarray*}
and
 \begin{eqnarray*}
   G_{4}(k,e,1,l_2)\hspace{-3mm}&=&\hspace{-2mm}\langle a, b\mid a^{3^{e}}=b^2=1, a^2=x, x^b=y, [x,y]=x^{3^{e-1}}y^{-3^{e-1}},\\
    &&\hspace{8mm} y^{a}=x^{-1}y^{-1},(ab)^3=x^{l_2\frac {n}3}y^{-l_2\frac {n}3}\rangle.
 \end{eqnarray*}
  Since for any given $k,e, l$, we have two maps $\M_{4}(k,e, 1, l,j)$ where $j=\pm 1$,
  we may assume that $\psi (a')=a^{j}$ where $j=\pm 1$ and $\psi(b')=b$.
 Then  from $(\psi(a')\phi(b'))^3=\phi(x')^{l_1\frac {n}3}\psi(y')^{-l_1\frac {n}3}$, we get
$(a^jb)^3=x^{jl_1\frac {n}3}y^{-jl_1\frac {n}3}$. However, one can check that this equation does not hold provided $l_1\ne l_2$.

By Lemma~\ref{cos}, all of the maps $\M_{4}(k,e, 1, l,j)$ are orientably-regular embeddings of $K_{3[n]}$. Clearly, all of these maps are chiral.
By a simple calculation,
we have that the order of $a^{\pm 1}b$ is $3$ if $l=0$ and $9$ if $l=\pm 1$.
Therefore the resulting maps have type $\{3, 2n\}$ if $l=0$ and $\{9, 2n\}$ if $l=\pm 1$.
\end{proof}

\section{Proof of Theorem~\ref{main}}
Let $\M$ be an orientably-regular embedding of $\Gamma=K_{m[n]}$ where $m\ge 3$ and $n\ge 2$, and let $\Aut^{+}_{0}(\M)$ be the normal subgroup  of $\Aut^{+}(\M)$ consisting of automorphisms preserving each part setwise. By Proposition~\ref{rth}, either $\Gamma=K_{p[p^e]}$ where $p\ge 5$ is prime, or  $\Gamma=K_{3[n]}$. For the case $\Gamma=K_{p[p^e]}$ where $p\ge 5$, we get $\MM\cong\MM_{1}(p,e,j)$ or $\MM_{2}(p,j)$ in Section 3. For the case $\Gamma=K_{3[n]}$,  we get $\MM \cong\M_{3}(k,e,j)$ or $\M_{4}(k,e, i, l,j)$ in Section 4. As proved in Section 3 and Section 4, all of the resulting maps are indeed the orientably-regular embedding of $\Gamma=K_{m[n]}$ and such maps are unique determined by the given parameters. Finally,   Table 1 and Table 2 directly can be obtained from Theorem \ref{m1}, Theorem \ref{abelian}, Theorem \ref{m33} and Theorem \ref{m3}. This finish the proof of Theorem~\ref{main}.\qed

 \end{document}